\documentclass[11pt,reqno]{amsart}
\usepackage{amsmath,amssymb,extarrows}
\usepackage{url}
\usepackage{tikz,enumerate}
\usepackage{diagbox}
\usepackage{appendix}
\usepackage{epic}

\usepackage{float} 

\vfuzz2pt 

\usepackage{cite}

\usepackage{hyperref}
\usepackage{array}

\usepackage{booktabs}

\setlength{\oddsidemargin}{0.2in}
\setlength{\evensidemargin}{0.2in}
\setlength{\textwidth}{5.8in}
\allowdisplaybreaks[4]

\newtheorem{theorem}{Theorem}
\newtheorem{corollary}[theorem]{Corollary}
\newtheorem{conj}[theorem]{Conjecture}
\newtheorem{lemma}[theorem]{Lemma}
\newtheorem{prop}[theorem]{Proposition}
\theoremstyle{definition}
\newtheorem{defn}{Definition}
\theoremstyle{remark}
\newtheorem{rem}{Remark}
\numberwithin{equation}{section}
\numberwithin{theorem}{section}
\numberwithin{defn}{section}


\begin{document}
\title[Sign Changes of Coefficients of Powers of the Infinite Borwein Product]
 {Sign Changes of Coefficients of Powers of the Infinite Borwein Product}

\author{Liuquan Wang}
\address{School of Mathematics and Statistics, Wuhan University, Wuhan 430072, Hubei, People's Republic of China}
\email{wanglq@whu.edu.cn;mathlqwang@163.com}

\subjclass[2010]{11P55, 11F03, 11F30, 26D15, 26D20}

\keywords{Sign changes; vanishing coefficients; eta products; hauptmodul; theta functions; asymptotics}


\begin{abstract}
We denote by $c_t^{(m)}(n)$ the coefficient of $q^n$ in the series expansion of $(q;q)_\infty^m(q^t;q^t)_\infty^{-m}$, which is the $m$-th power of the infinite Borwein product. Let $t$ and $m$ be positive integers with $m(t-1)\leq 24$. We  provide asymptotic formula for $c_t^{(m)}(n)$, and give characterizations of $n$ for which $c_t^{(m)}(n)$ is positive, negative or zero. We show that $c_t^{(m)}(n)$ is ultimately periodic in sign and conjecture that this is still true for other positive integer values of $t$ and $m$. Furthermore, we confirm this conjecture in the cases $(t,m)=(2,m),(p,1),(p,3)$ for arbitrary positive integer $m$ and prime $p$.
\end{abstract}

\maketitle
\tableofcontents

\section{Introduction and main results}\label{sec-intro}
For a positive integer $t$, we define the following infinite product and write its series expansion as
\begin{align}
G_t(q):=\prod\limits_{n=1}^\infty \frac{1-q^n}{1-q^{tn}}=\frac{(q;q)_\infty}{(q^t;q^t)_\infty}=\sum_{n=0}^\infty c_t(n)q^n.
\end{align}
Here and throughout this paper, we use the standard $q$-series notation:
\begin{align}
(a;q)_\infty&:=\prod\limits_{n=0}^\infty (1-aq^n), \quad |q|<1.
\end{align}

Andrews \cite{Andrews} mentioned that P. Borwein (1993) considered the series expansion of $G_p(q)$ for $p$ being a prime. The function $G_t(q)$ was thus called the \textit{infinite Borwein product} by Schlosser and Zhou \cite{Schlosser-Zhou}.

Andrews \cite{Andrews} studied the sign pattern of $c_p(n)$ and proved \cite[Theorem 2.1]{Andrews} that for all primes $p$, $c_p(n)$ and $c_p(n+p)$ have the sam sign for each $n\geq 0$, which he also wrote as
\begin{align}\label{eq-Andrews-sign}
c_p(n)c_p(n+p)\geq 0 \quad \text{for all $n\geq 0$}.
\end{align}
He mentioned that Garvan and Borwein have a different proof  of this result (unpublished). The statement in \cite[Theorem 2.1]{Andrews} is not in the best form as it does not tell us whether the signs of $c_p(n-p)$ and $c_p(n+p)$ are the same or not when $c_p(n)=0$. In fact, from the  proof of this result in \cite{Andrews}, Andrews actually proved a stronger result:  for all $n,k\geq 0$,
\begin{align}\label{eq-strong-Andrews-sign}
c_p(n)c_p(n+pk)\geq 0.
\end{align}

It is natural to consider powers of the infinite Borwein product. Let $m$ be a real number. We write the series expansion of $G_t^m(q)$ as
\begin{align}\label{ctmn-defn}
G_t^m(q)=\frac{(q;q)_\infty^m}{(q^t;q^t)_\infty^m}=\sum_{n=0}^\infty c_t^{(m)}(n)q^n.
\end{align}
In particular, for $t\in \{2,3,4,5,7,9,13\}$ and $m=\frac{24}{t-1}$, it is known that
\begin{align}\label{eq-hauptmodul}
q^{-1}(G_t(q))^{\frac{24}{t-1}}=\Big(\frac{\eta(\tau)}{\eta(t\tau)}\Big)^{\frac{24}{t-1}}
\end{align}
is a hauptmodul of the congruence subgroup $\Gamma_0(t)$. Here
$$\eta(\tau):=q^{1/24}(q;q)_\infty, \quad q=e^{2\pi i\tau}, \quad \mathrm{Im} \tau>0.$$
is the Dedekind eta function.

Hauptmoduls are important objects in the theory of modular forms. One aspect of their importance is that they serve as generators of function fields consisting of certain modular functions. For instance, every modular function invariant under $\Gamma_0(t)$ ($t\in \{2,3,4,5,7,9,13\}$) can be expressed as a rational function of the hauptmodul in \eqref{eq-hauptmodul}.

Surprisingly, the coefficients $c_t^{(m)}(n)$ also posses some nice sign pattern. For $t=2$, we have
\begin{align}
&\sum_{n=-1}^\infty c_2^{(24)}(n+1)q^n=\frac{\eta^{24}(\tau)}{\eta^{24}(2\tau)}=\frac{1}{q}\prod\limits_{n=1}^\infty \frac{(1-q^n)^{24}}{(1-q^{2n})^{24}}  \nonumber \\
&=\frac{1}{q}-24+276q-2048q^2+11202q^3-49152q^4+184024q^5-614400q^6+O(q^7).
\end{align}
Ohta \cite{Ohta} found that
\begin{align}\label{c2(n)-exp}
c_2^{(24)}(n+1)=\frac{1}{n}\Big(\sum_{r\in \mathbb{Z}} t(n-r^2)+\sum_{r\geq 1, r~\text{odd}} (-1)^nt(4n-r^2)+24\sum_{d|n, d~\text{odd}} d\Big).
\end{align}
Here $t(d)$ is the trace of singular moduli of discriminant $-d$. See \cite{Ohta} for explicit definition of $t(d)$. Matsusaka and Osanai \cite{MO} found similar formulas for $c_t^{(m)}(n)$ with $(t,m)\in \{(2,24),(3,12),(5,6)\}$. Based on these formulas and Laplace's method, they \cite{MO} proved that as $n\rightarrow \infty$,
\begin{align}
c_2^{(24)}(n+1)&\sim \frac{e^{2\pi \sqrt{n}}}{2n^{3/4}} (-1)^{n+1}, \label{c2-asymptotic} \\
c_3^{(12)}(n+1)&\sim \frac{e^{4\pi \sqrt{n}/3}}{\sqrt{6}n^{3/4}} \times \left\{\begin{array}{ll}
-1 & n\equiv 0,2\pmod{3}, \\
2 & n\equiv 1\pmod{3};
\end{array}\right. \label{c3-asymptotic} \\
c_5^{(6)}(n+1) &\sim \frac{e^{4\pi \sqrt{n}/5}}{\sqrt{10}n^{3/4}}\times \left\{\begin{array}{ll}
-1 & n\equiv 0 \pmod{5},\\
\frac{3+\sqrt{5}}{2} & n\equiv 1 \pmod{5}, \\
-1+\sqrt{5} & n\equiv 2 \pmod{5}, \\
-1-\sqrt{5} & n\equiv 3 \pmod{5}, \\
\frac{3-\sqrt{5}}{2} & n\equiv 4 \pmod{5}.
\end{array}\right. \label{c5-asymptotic}
\end{align}
This means that $c_2^{(24)}(n)$, $c_3^{(12)}(n)$ and $c_5^{(6)}(n)$ possesses a sign-change property for large $n$. Later Hu and Ye \cite{Hu-Ye} proved that for any $n\geq 0$,
\begin{align}
(-1)^{n}c_2^{(24)}(n)>0.
\end{align}
Their proof is based on careful analysis of the formula \eqref{c2(n)-exp}.

For prime $t\geq 3$ with $(t-1)|24$, i.e., $t\in \{3,5,7,11,13\}$, Hu and Ye \cite{Hu-Ye}  mentioned that in \cite{Hu-Ye-preparation} they will study the sign change  behaviors of the coefficients of $(G_t(q))^{24/(t-1)}$ by considering the generalized traces of singular moduli.

In 2019, Schlosser \cite{Schlosser} considered real powers of the infinite Borwein product and presented an interesting conjecture. For $\frac{9-\sqrt{73}}{2}\leq \delta \leq 1$ or $2\leq \delta \leq 3$, he conjectured that for all $n\geq 0$,
\begin{align}
c_3^{(\delta)}(3n)\geq 0, \quad c_3^{(\delta)}(3n+1)\leq 0, \quad c_3^{(\delta)}(3n+2)\leq 0.
\end{align}
In a recent work by Schlosser and Zhou \cite{Schlosser-Zhou}, a growth estimate for $c_3^{(\delta)}(n)$ for $\delta\in (0.227,3]$ was given by using the circle method. As a consequence, they were able to prove that for all integers $n\geq 158$ and $\delta \in [0.227,2.9999]$,
\begin{align}
c_3^{(\delta)}(n)c_3^{(\delta)}(n+3)>0.
\end{align}
For $(t,m)=(3,3)$, They \cite[Theorem 6]{Schlosser-Zhou} also proved the following cubic analogue of Andrews' result: for all primes $p$, $c_p^{(3)}(n)$ and $c_p^{(3)}(n+p)$ have the same sign for each $n\geq 0$, i.e.,
\begin{align}\label{eq-Schlosser-sign}
c_p^{(3)}(n)c_p^{(3)}(n+p)\geq 0.
\end{align}
Again, similar to Andrews' \eqref{eq-Andrews-sign}, this statement is not of the best form. In fact, from the proof given in \cite{Schlosser-Zhou},  \eqref{eq-Schlosser-sign} can be replaced by the following stronger assertion:
\begin{align}\label{eq-strong-Schlosser-sign}
c_p^{(3)}(n)c_p^{(3)}(n+pk)\geq 0, \quad \text{for all $n,k\geq 0$}.
\end{align}

For convenience, we introduce some concepts for sequences with nice sign patterns.
\begin{defn}\label{defn-UPS}
Given a sequence $\{a(n)\}_{n\geq 0}$ of real numbers. If there exists a positive number $t$ such that $a(n)a(n+tk)\geq 0$ holds for all sufficiently large $n$ and any positive integer $k$, then we call $\{a(n)\}_{n\geq 0}$ a \textit{sequence ultimately weakly periodic in sign} (weak UPS sequence for short). We call $t$ as a \textit{weak period of sign} for $\{a(n)\}_{n\geq 0}$. We also call the smallest one among the weak periods as the \textit{least weak period of sign} for $\{a(n)\}_{n\geq 0}$.
\end{defn}
The reason for using the adjective ``weak'' is that for a weak UPS sequence, we admit that 0 has the same sign with positive numbers and negative numbers. But in many cases, we may want to differ the sign of 0 from nonzero numbers. That is, we define a more strict sign as
\begin{align}
\mathrm{sgn}(x):=\left\{\begin{array}{ll}
1 & x>0,\\
0 & x=0, \\
-1 & x<0.
\end{array}\right.
\end{align}
We are more interested in sequences $\{a(n)\}$ for which $\{\mathrm{sgn}(a(n))\}$ is ultimately periodic.
\begin{defn}\label{defn-strongUPS}
Suppose $\{a(n)\}$ is a sequence of real numbers. If there exists some positive integer $t$ such that $\mathrm{sgn}(a(n))=\mathrm{sgn}(a(n+t))$ for all sufficiently large $n$, then we call $\{a(n)\}_{n\geq 0}$ a \textit{sequence ultimately periodic in sign} (UPS sequence for short) and call $t$ as its \textit{period of sign}. The smallest period of sign is called its \textit{least period of sign}.
\end{defn}
Before we go further, we make some remarks on these concepts.
\begin{enumerate}
\item It is clear that a UPS sequence is also a weak UPS sequence. But the converse is not true. For example, if we define $a(n)=1$ for $n=k^2$ ($k\in \mathbb{Z}$) and 0 otherwise. Then $\{a(n)\}$ is a weak UPS sequence with least weak period of sign 1. But it is not a UPS sequence.

\item It is easy to show that for a UPS sequence with least period of sign $t$, all the periods of sign for this sequence are $kt$ with $k$ being any positive integer. This is not true for weak UPS sequence. For example, if we define for $n\geq 0$ that $a(6n)=1$, $a(6n+1)=-1$ and $a(6n+r)=0$ ($r=2,3,4,5$), then $\{a(n)\}$ is a weak UPS sequence with least weak period of sign 2. But 3 is also a weak period of sign for it and $2$ does divide $3$.

\item For a UPS sequence, the period of sign is also a weak period of sign but the converse is not true. For instance, consider the sequence $a(n)=\frac{1}{2}(1+(-1)^n)$ ($n=0,1,2,\cdots $). Then its least period of sign is 2 but its least weak period of sign is $1$.
\end{enumerate}

For any prime $p$, thanks to the works of Andrews \cite{Andrews} and Schlosser and Zhou \cite{Schlosser-Zhou}, we now know that $\{c_p^{(1)}(n)\}$ and $\{c_p^{(3)}(n)\}$ are both weak UPS sequences with period $p$. In fact, later we will show that they are actually UPS sequences with $p$ as the least period of sign. By the work of Matsusaka and Osanai \cite{MO}, from \eqref{c2-asymptotic}--\eqref{c5-asymptotic} we know that $\{c_t^{(m)}(n)\}$ is a UPS sequence for $(t,m)\in \{(2,23),(3,12),(5,6)\}$.

We should be aware that not all coefficients generated by infinite products are (weak) UPS sequences. For instance, Euler's pentagonal number theorem and Jacobi's identity state that
\begin{align}
(q;q)_\infty=\sum_{n=-\infty}^\infty (-1)^nq^{n(3n+1)/2}, \quad \text{and}  \label{eq-pentagonal}\\
(q;q)_\infty^3=\sum_{n=-\infty}^\infty (-1)^n(2n+1)q^{n(n+1)/2}, \label{eq-Jacobi}
\end{align}
respectively.  It is then clear that the coefficients of $(q;q)_\infty$ or $(q;q)_\infty^3$ do not form a weak UPS sequence.

In this paper, we will discuss whether $\{c_t^{(m)}(n)\}$ form a UPS sequence or not for positive integers $t$ and $m$. To achieve this goal, we shall investigate its sign change behaviors. We will show that for all positive integers $t$ and $m$ with $m(t-1)\leq 24$, $\{c_t^{(m)}(n)\}$ is a UPS sequence. Furthermore, we give explicit characterizations of its sign-patterns by determining the sign of $c_t^{(m)}(n)$ for each $n$. To state the results, we need more notations.

For positive integer $k$ and integer $h$ with $(h,k)=1$, the Dedekind sum $s(h,k)$ is defined by
\begin{align}\label{Dedekind-sum}
s(h,k):=\sum_{r=1}^{k-1} \frac{r}{k}\left(\frac{hr}{k}-\left\lfloor \frac{hr}{k} \right\rfloor -\frac{1}{2} \right).
\end{align}
Here $\lfloor x\rfloor $ denotes the integer part of $x$ and we agree that $s(h,1)=0$. We define
\begin{align}\label{alpha-defn}
\alpha_t^{(m)}(n):=\sum_{\begin{smallmatrix}
0\leq h <t \\ \mathrm{gcd}(h,t)=1
\end{smallmatrix}}\exp\left(-m\pi i s(h,t)-\frac{2\pi inh}{t}\right).
\end{align}
It is easy to see that $\alpha_t^{(m)}(n)$ is a periodic function with period $t$, i.e., for all $n\geq 0$,
\begin{align}\label{alpha-periodic}
\alpha_t^{(m)}(n)=\alpha_t^{(m)}(n+t).
\end{align}

The modified Bessel function of the first kind is defined as
\begin{align}
I_s(x):=\sum_{m=0}^\infty \frac{1}{m!\Gamma(m+s+1)}\left(\frac{x}{2}\right)^{2m+s},
\end{align}
where $\Gamma(s)=\int_0^\infty e^{-x}x^{s-1}dx$ ($\mathrm{Re}~ s>0$) is the gamma function. In particular, when $s=n$ is an integer, we have
\begin{align}\label{Bessel-int}
I_n(x)=\frac{1}{\pi}\int_0^{\pi} e^{x\cos \theta}\cos(n\theta)d\theta.
\end{align}

We establish the following asymptotic formula for $c_t^{(m)}(n)$, which will be a key in recognizing the signs of the coefficients.
\begin{theorem}\label{thm-full-asymptotic}
Let $t$ and $m$ be positive integers with $m(t-1)\leq 24$. Let
\begin{align}
A&:=\max \left\{\frac{t}{\gcd(t,\ell)}: 1\leq \ell \leq t \,\, \text{and} \,\, \gcd(t,\ell)>\sqrt{t}\right\}, \label{A-defn} \\
M&:=\max \left\{\frac{\sqrt{m(t-1)}}{2t}\right\}\bigcup \left\{\frac{1}{\ell}\sqrt{\frac{m\left({\gcd}^2(t,\ell)-t\right)}{t}}: 1\leq \ell< t \,\, \text{and} \,\, \gcd(t,\ell)>\sqrt{t} \right\}. \label{M-defn}
\end{align}
Then
\begin{align}\label{eq-full}
c_t^{(m)}(n)=\frac{2\pi \mu^{\frac{1}{2}}}{t}(n-\mu)^{-\frac{1}{2}} \alpha_t^{(m)}(n)I_{-1}\left(\frac{4\pi\sqrt{\mu(n-\mu)}}{t} \right)+E_t^{(m)}(n),
\end{align}
where $\mu=m(t-1)/24$, $|E_t^{(m)}(n)|\leq \overline{E}_t^{(m)}(n)$ with
\begin{align}\label{error-exp}
\overline{E}_t^{(m)}(n):= & \frac{\pi^{\frac{7}{4}}}{2^{\frac{3}{4}}}A^{\frac{m}{2}}\mu^{\frac{1}{4}}\exp\left(\frac{\sqrt{6}\pi M}{3}\sqrt{n-\mu} \right)
+2te^{2+8\pi\mu}  \nonumber \\
&+2e^2t^{\frac{m}{2}+1}\exp\left(\pi \mu +m\Big(\frac{e^{-\pi}}{(1-e^{-\pi})^2}+\frac{e^{-\pi/t}}{(1-e^{-\pi/t})^2} \Big) \right).
\end{align}
\end{theorem}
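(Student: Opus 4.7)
The plan is to apply the Hardy-Ramanujan circle method to the Cauchy integral
$$c_t^{(m)}(n)=\frac{1}{2\pi i}\oint_{|q|=r} G_t^m(q)\,q^{-n-1}\,dq,$$
with radius $r$ of the form $e^{-2\pi/N^2}$ for $N\asymp (n/\mu)^{1/4}$, so that the saddle point on the dominant cusp is correctly centred. Since $G_t^m(q)=q^\mu\bigl(\eta(\tau)/\eta(t\tau)\bigr)^m$ with $\mu=m(t-1)/24\le 1$ by hypothesis, $q^{-\mu}G_t^m(q)$ is a weight-zero modular function on $\Gamma_0(t)$ with a mild polar part at $\infty$, and the classical Rademacher-type Farey dissection applies cleanly. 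I would partition the circle into Farey arcs $\xi_{h,k}$ around $e^{2\pi ih/k}$ for $0\le h<k\le N$ with $\gcd(h,k)=1$, which sets up the estimate on an arc-by-arc basis.

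On each arc I would perform the modular transformation of $\eta$ sending $h/k\to i\infty$, which introduces the Dedekind-sum phase $e^{-m\pi i s(h,k)}$ together with its counterpart from $\eta(t\tau)$, and converts $\eta^m(\tau)/\eta^m(t\tau)$ into a product of an explicit exponential singular factor times a bounded infinite-product factor on the transformed Ford circle. The dominant contribution comes exactly from $k=t$ with $\gcd(h,t)=1$: here the singular factor has rate $4\pi\sqrt{\mu(n-\mu)}/t$, and summing the Dedekind-sum phases over the admissible $h$ produces precisely the Kloosterman-type sum $\alpha_t^{(m)}(n)$ of \eqref{alpha-defn}. Recognising the resulting main arc integral via the integral representation \eqref{Bessel-int} for $I_{-1}$ then delivers the principal term of \eqref{eq-full}.

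The bulk of the work is the error bound \eqref{error-exp}, which I would organise by $d=\gcd(t,k)$. When $d\le\sqrt{t}$ the transformed integrand is uniformly small, since neither of the two $\eta$-factors lies near its cusp; bounding via $|(q;q)_\infty|^{-1}\le\exp\!\bigl(e^{-\pi}/(1-e^{-\pi})^2\bigr)$ together with its $q^{1/t}$ analogue contributes the $2te^{2+8\pi\mu}$ and $2e^2 t^{m/2+1}\exp(\cdots)$ pieces. When $d>\sqrt{t}$ but $k\ne t$, or when one is on the tails of the dominant arc, the residual exponential growth has rate at most $(\sqrt{6}\pi M/3)\sqrt{n-\mu}$ with amplitude controlled by $A^{m/2}\mu^{1/4}$, which gives the first summand of \eqref{error-exp}. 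The main obstacle is the uniform verification that every such subdominant rate is strictly smaller than the main-term rate $4\pi\sqrt{\mu}/t$; after squaring, the required inequality $M<2\sqrt{6\mu}/t$ reduces to $t\gcd^2(t,\ell)<t^2+(t-1)\ell^2$ for $1\le\ell<t$, which follows from $\gcd(t,\ell)\le\ell<t$. It is precisely here that the hypothesis $m(t-1)\le 24$, through $\mu\le 1$, is used to ensure the error does not eclipse the main term in \eqref{eq-full}.
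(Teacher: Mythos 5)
Your route is genuinely different from the paper's: the paper does not redo the circle method but simply invokes Chern's explicit asymptotic formula for eta-quotients (Theorem \ref{thm-Chern}), verifies its hypothesis \eqref{ineq-cond} in one line, reads off the $k=t$ term as the main term, and then bounds the leftover major-arc sum $E^*(n)$ via the elementary Bessel inequality \eqref{I-1-upper} and the leftover $E(n)$ via Chern's explicit bound \eqref{E(n)-explicit}. Reproving the dissection from scratch is legitimate and would make the result self-contained, but it is substantially more work than the paper's two pages, and as written your sketch has a genuine gap at exactly the point that makes the theorem nontrivial.

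The gap is the role of the hypothesis $m(t-1)\leq 24$. You place it in the verification that the subdominant rates are beaten by the main rate, i.e. $M<2\sqrt{6\mu}/t$. But that inequality is independent of the hypothesis: as your own reduction shows, it squares to $t\gcd^2(t,\ell)<t^2+(t-1)\ell^2$, in which $m$ has cancelled entirely; it holds for every positive integer $m$ (this is the content of Remark \ref{rem-error} in the paper). Where the hypothesis is actually needed is in the step you dismiss as ``a bounded infinite-product factor on the transformed Ford circle.'' After the modular transformation at $h/k$ with $t\mid k$, the integrand is the singular factor $\exp\bigl(\pi\Delta_3(k)/(12k^2z)\bigr)$ times $\prod_r(\tilde q_r;\tilde q_r)_\infty^{\delta_r}$ with $|\tilde q_r|=\exp\bigl(-2\pi\gcd^2(m_r,k)\,\mathrm{Re}(1/z)/(m_rk^2)\bigr)$, and on the outer portions of the Farey arc, where $\mathrm{Re}(1/z)$ is large, the product of these two factors stays bounded precisely when $2\pi\gcd^2(m_r,\ell)/m_r\geq \pi\Delta_3(\ell)/12$ for each $r$, i.e. when \eqref{ineq-cond} holds --- which for $G_t^m$ at $\ell=t$ is exactly $m(t-1)\leq 24$. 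Without this uniform control over the whole arc (not just near its centre), the completion of the main-arc integral to the Bessel function $I_{-1}$ produces an error that is not $O(1)$ and the decomposition \eqref{eq-full}--\eqref{error-exp} fails; this is also why the paper cannot push Conjecture \ref{conj} beyond $m(t-1)\leq 24$ by this method. Your proposal needs to supply this arc-wide estimate (or explicitly invoke Chern's theorem, as the paper does) before the error analysis can be considered complete. A secondary point: to recover the explicit constants in \eqref{error-exp}, e.g. the factor $e^{2+8\pi\mu}$ coming from $2\pi(n-\mu)/N^2\leq 2$, you must take $N=\lfloor\sqrt{2\pi(n-\mu)}\rfloor$ as in Chern's setup rather than $N\asymp(n/\mu)^{1/4}$.
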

The proof of this theorem will rely on the work of Chern \cite{Chern}, which is based on the circle method. We remark here that for $t$ being a prime and $\delta \in (0,24/(t-1)]$, an asymptotic formula for $c_t^{(\delta)}(n)$ with different form was given by Schlosser and Zhou \cite{Schlosser-Zhou} through the circle method.

Recall that for fixed $s$, when $|\arg x|<\frac{\pi}{2}$, we have (see \cite[p. 377, (9.7.1)]{AS}, for example.)
\begin{align}
I_s(x)\sim \frac{e^x}{\sqrt{2\pi x}}\left(1-\frac{4s^2-1}{8x}+\frac{(4s^2-1)(4s^2-9)}{2!(8x)^2}-\cdots \right). \label{Bessel-general-asymptotic}
\end{align}
This together with Theorem \ref{thm-full-asymptotic} leads to the following simplified asymptotic formula, which is easier for us to understand the growth rate of $c_t^{(m)}(n)$.
\begin{corollary}\label{cor-asymptotic}
For any positive integers $t$ and $m$ with $m(t-1)\leq 24$, let $\mu=m(t-1)/24$.  As $n\rightarrow \infty$, we have
\begin{align}\label{eq-asymptotic}
c_t^{(m)}(n)=\frac{\mu^{\frac{1}{4}}}{\sqrt{2t}}\frac{e^{4\pi\sqrt{\mu(n-\mu)}/t}}{(n-\mu)^{3/4}}\left(\alpha_t^{(m)}(n)+O(n-\mu)^{-\frac{1}{2}}\right).
\end{align}
\end{corollary}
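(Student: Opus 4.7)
The plan is to feed the Bessel asymptotic \eqref{Bessel-general-asymptotic} into the exact formula \eqref{eq-full} and then verify that the error term $E_t^{(m)}(n)$ is negligible compared with the main term. Setting $s = -1$ and $x = \tfrac{4\pi\sqrt{\mu(n-\mu)}}{t}$, \eqref{Bessel-general-asymptotic} yields
\[
I_{-1}(x) = \frac{e^{x}}{\sqrt{2\pi x}}\bigl(1 + O(x^{-1})\bigr) \qquad (n \to \infty),
\]
since $4s^{2}-1=3$ governs the next-to-leading correction. Substituting this into the main term of \eqref{eq-full} and using $\sqrt{2\pi x} = \tfrac{2\pi\sqrt{2}}{\sqrt{t}}\,(\mu(n-\mu))^{1/4}$, the prefactor collapses to
\[
\frac{2\pi\mu^{1/2}}{t}(n-\mu)^{-1/2}\cdot \frac{1}{\sqrt{2\pi x}} \;=\; \frac{\mu^{1/4}}{\sqrt{2t}\,(n-\mu)^{3/4}},
\]
which is exactly the coefficient displayed in \eqref{eq-asymptotic}. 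Since $\alpha_t^{(m)}(n)$ is a bounded function of $n$ (a finite sum of unit complex numbers, bounded by $\varphi(t)$), the Bessel correction $O(x^{-1}) = O((n-\mu)^{-1/2})$ can be absorbed into the factor $\alpha_t^{(m)}(n) + O((n-\mu)^{-1/2})$ of \eqref{eq-asymptotic}.

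Next I would control the error $E_t^{(m)}(n)$, whose bound $\overline{E}_t^{(m)}(n)$ in \eqref{error-exp} is dominated for large $n$ by $\exp\bigl(\tfrac{\sqrt{6}\pi M}{3}\sqrt{n-\mu}\bigr)$ up to multiplicative constants. I would compare this exponential rate with the main rate $\tfrac{4\pi\sqrt{\mu}}{t}\sqrt{n-\mu}$. The first member of the union in \eqref{M-defn} equals $\tfrac{\sqrt{m(t-1)}}{2t} = \tfrac{\sqrt{6\mu}}{t}$, which produces the error rate $\tfrac{2\pi\sqrt{\mu}}{t}$, exactly half of the main rate. For each $\gcd$-type member, writing $d=\gcd(t,\ell)>\sqrt{t}$, $t'=t/d$ and $\ell'=\ell/d$, the comparison reduces by a short manipulation to the inequality $t'(d-t') < \ell'^{2}(dt'-1)$; this holds because $d>t'$ and because $\ell<t$ forces $t'\geq 2$. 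Hence $M = \tfrac{\sqrt{m(t-1)}}{2t}$ and the error rate is exactly half the main rate.

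It follows that $E_t^{(m)}(n)$ decays super-polynomially relative to the main term, so upon dividing by $\tfrac{\mu^{1/4}}{\sqrt{2t}}(n-\mu)^{-3/4}e^{4\pi\sqrt{\mu(n-\mu)}/t}$ it becomes $O((n-\mu)^{-1/2})$ with room to spare, and is absorbed into the parenthetical error of \eqref{eq-asymptotic}. The only nontrivial step I anticipate is the arithmetic verification that no $\gcd$-type element of \eqref{M-defn} exceeds $\tfrac{\sqrt{m(t-1)}}{2t}$; once that is confirmed, assembling the Bessel asymptotic, the prefactor simplification, and the error absorption is routine bookkeeping.
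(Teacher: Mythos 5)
Your overall route is the paper's: the paper proves this corollary in one line by citing \eqref{Bessel-general-asymptotic} and Theorem \ref{thm-full-asymptotic}, and your computation of the prefactor (that $\frac{2\pi\mu^{1/2}}{t}(n-\mu)^{-1/2}/\sqrt{2\pi x}$ collapses to $\frac{\mu^{1/4}}{\sqrt{2t}}(n-\mu)^{-3/4}$) is correct, as is the absorption of the Bessel correction $O(x^{-1})=O((n-\mu)^{-1/2})$ into the parenthesis.

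However, your claim that $M=\frac{\sqrt{m(t-1)}}{2t}$, i.e.\ that no $\gcd$-type member of the union in \eqref{M-defn} exceeds the first member, is false. Writing $d=\gcd(t,\ell)$, $t'=t/d$, $\ell'=\ell/d$, the comparison
\[
\frac{1}{\ell}\sqrt{\frac{m(d^{2}-t)}{t}}\;\leq\;\frac{\sqrt{m(t-1)}}{2t}
\]
reduces after squaring and clearing denominators to $4t'(d-t')\leq \ell'^{2}(dt'-1)$; you dropped the factor $4$. The corrected inequality fails, e.g., for $t=8$, $\ell=4$ (so $d=4>\sqrt{8}$, $t'=2$, $\ell'=1$): the $\gcd$-type member is $\frac{1}{4}\sqrt{m}$ while the first member is $\frac{\sqrt{7m}}{16}<\frac{\sqrt{m}}{4}$. (For prime $t$ the set $\{1\leq\ell<t:\gcd(t,\ell)>\sqrt{t}\}$ is empty and your claim holds vacuously; the problem is composite $t$ with a divisor strictly between $\sqrt{t}$ and $t$.) What rescues the argument is that you do not need $M$ to equal half the main rate --- you only need the error exponent $\frac{\sqrt{6}\pi M}{3}$ to be strictly smaller than the main exponent $\frac{4\pi\sqrt{\mu}}{t}$, i.e.\ $M<\frac{\sqrt{24\mu}}{t}$. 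This weaker bound is exactly what the paper establishes in Remark \ref{rem-error}, via $\frac{\Delta_3(\ell)}{\ell^2}<\frac{\Delta_3(t)}{t^2}$ for $\ell\in\mathcal{L}_{>0}$ with $\ell<t$ (using only $d\leq\ell<t$). With that substitution your error-absorption step goes through with exponential room to spare, and the rest of your argument stands.
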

When $(t,m)\in \{(2,24),(3,12),(5,6)\}$, by a direct calculation of $\alpha_t^{(m)}(n)$ and Corollary \ref{cor-asymptotic}, we recover \eqref{c2-asymptotic}--\eqref{c5-asymptotic}.

Note that when $\alpha_t^{(m)}(n)\neq 0$,  Theorem \ref{thm-full-asymptotic} and Corollary \ref{cor-asymptotic} give exact orders of $c_t^{(m)}(n)$. We can see that the sign of $c_t^{(m)}(n)$ will be the same as $\alpha_t^{(m)}(n)$ when $n$ is large enough. When $\alpha_t^{(m)}(n)=0$, \eqref{eq-full} and \eqref{eq-asymptotic} give upper bounds for $c_t^{(m)}(n)$. From \eqref{alpha-periodic} we know $\alpha_t^{(m)}(n)$  depends only on the residue of $n$ modulo $t$. Thus we define
\begin{align}
P_t^{(m)}:=\{r: 0\leq r<t \,\, \text{and} \,\, \alpha_t^{(m)}(r)>0\}, \\
Z_t^{(m)}:=\{r: 0\leq r<t \,\, \text{and} \,\, \alpha_t^{(m)}(r)=0\}, \\
N_t^{(m)}:=\{r: 0\leq r<t \,\, \text{and} \,\, \alpha_t^{(m)}(r)<0\}.
\end{align}
It is clear that
\begin{align}
P_t^{(m)}\cup Z_t^{(m)}\cup N_t^{(m)}=\{0,1,\cdots,t-1\}.
\end{align}
These sets can be found by computing the values of $\alpha_t^{(m)}(r)$ using \eqref{alpha-defn}. We have done so for all positive integer pairs $(t,m)$ satisfying $m(t-1)\leq 24$ and list these sets in the Appendix.

From Theorem \ref{thm-full-asymptotic} or Corollary \ref{cor-asymptotic} we get the following consequence in the case $\alpha_t^{(m)}(n)\neq 0$.
\begin{theorem}\label{thm-sign}
For any positive integers $t,m$ with $m(t-1)\leq 24$, there exists some positive integer $n_0$ such that when $n\geq n_0$,
\begin{align}
c_t^{(m)}(tn+r)>0, \quad r \in P_t^{(m)}, \label{eq-positive} \\
c_t^{(m)}(tn+r)<0, \quad r \in N_t^{(m)}.  \label{eq-negative}
\end{align}
\end{theorem}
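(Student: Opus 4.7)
The plan is to deduce the theorem as a direct consequence of Corollary~\ref{cor-asymptotic}. First, I would invoke the periodicity \eqref{alpha-periodic}, which gives $\alpha_t^{(m)}(tn+r)=\alpha_t^{(m)}(r)$ for every $0\le r<t$ and $n\ge 0$. In particular, for $r\in P_t^{(m)}$ the value $\alpha_t^{(m)}(r)$ is a strictly positive real constant depending only on $(t,m,r)$, and for $r\in N_t^{(m)}$ it is strictly negative. These values are therefore bounded away from $0$ uniformly in $n$.

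Next, I would substitute $n\mapsto tn+r$ into Corollary~\ref{cor-asymptotic} to obtain
\[
c_t^{(m)}(tn+r)=\frac{\mu^{1/4}}{\sqrt{2t}}\cdot\frac{e^{4\pi\sqrt{\mu(tn+r-\mu)}/t}}{(tn+r-\mu)^{3/4}}\Bigl(\alpha_t^{(m)}(r)+O\bigl((tn+r-\mu)^{-1/2}\bigr)\Bigr).
\]
The prefactor is manifestly positive for all sufficiently large $n$, and for each fixed residue $r$ the $O$-term tends to $0$ as $n\to\infty$. Hence there exists a threshold $n_0(r)$ beyond which the parenthesised factor has the same sign as $\alpha_t^{(m)}(r)$, settling \eqref{eq-positive} and \eqref{eq-negative} for residue $r$. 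Since $\{0,1,\dots,t-1\}$ is finite, I would then set $n_0:=\max_{r\in P_t^{(m)}\cup N_t^{(m)}}n_0(r)$ to obtain a uniform threshold valid for every residue class simultaneously.

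The main substantive work has already been absorbed into the proof of Theorem~\ref{thm-full-asymptotic}; in particular, the hard part is verifying that the exponential growth rate $\exp(\sqrt{6}\pi M\sqrt{n-\mu}/3)$ of the error bound $\overline{E}_t^{(m)}(n)$ is strictly smaller than the rate $\exp(4\pi\sqrt{\mu(n-\mu)}/t)$ of the main term, which is precisely what makes Corollary~\ref{cor-asymptotic} usable and what the constraint $m(t-1)\le 24$ is designed to secure. Consequently no genuine obstacle remains at the present stage. The only point worth flagging is that the argument says nothing for $r\in Z_t^{(m)}$: there the principal term in \eqref{eq-full} vanishes identically and the sign of $c_t^{(m)}(tn+r)$ is controlled only by $E_t^{(m)}(tn+r)$, which requires separate and more delicate estimates. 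This is consistent with the statement, which deliberately makes no claim about residues in $Z_t^{(m)}$.
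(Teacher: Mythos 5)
Your proposal is correct and follows essentially the same route as the paper, which likewise deduces the theorem directly from Corollary~\ref{cor-asymptotic} by observing that for large $n$ with $\alpha_t^{(m)}(n)\neq 0$ the main term dominates and forces $c_t^{(m)}(n)$ to share the sign of $\alpha_t^{(m)}(n)$. Your additional remarks on periodicity, the finiteness of the residue classes, and the role of the residues in $Z_t^{(m)}$ merely make explicit what the paper leaves implicit.
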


Furthermore, when $\alpha_t^{(m)}(n)\neq 0$, there are only finitely many $n$ for which $c_t^{(m)}(n)$ does not have the same sign as $\alpha_t^{(m)}(n)$. We denote the set of such exceptional $n$'s as $\mathcal{E}_t^{(m)}$. Given any pair of positive integer $(t,m)$ with $m(t-1)\leq 24$, it is possible to find $\mathcal{E}_t^{(m)}$ explicitly based on Theorem \ref{thm-full-asymptotic}.  This requires some tedious calculus and computation. At the end of Section \ref{sec-asymptotic}, we will treat the case when $t$ is a prime and use it to illustrate the procedure. We have found $\mathcal{E}_t^{(m)}$ for primes $t\leq 24$ and record them in the Appendix.  The case for $t$ being a composite number can be treated similarly. But the computations involved are more complicated and we need to discuss case by case. Thus we do not pursue it here.

Theorem \ref{thm-sign} already means that $\{c_t^{(m)}(n)\}$ is a weak UPS sequence. To see whether it is a UPS sequence or not, we need to identify those $n$ such that $c_t^{(m)}(n)=0$. The asymptotic formula in \eqref{eq-asymptotic} indicates that the residue of such $n$ modulo $t$ must be in $Z_t^{(m)}$. This turns out to be true except for three special cases.
\begin{theorem}\label{thm-zero}
For any positive integers $t,m$ with $m(t-1)\leq 24$,  we have for all $n\geq 0$ that
\begin{align}
c_t^{(m)}(tn+r)=0, \quad r \in Z_t^{(m)} \label{eq-zero}
\end{align}
except for $c_4^{(8)}(1)=-8$, $c_9^{(3)}(1)=-3$ and three special cases with
$$(t,m) \in \{(3,9),(4,4),(5,5)\}.$$
In these cases, we have for any $n\geq 0$,
\begin{align}
&c_3^{(9)}(9n)>0, \quad c_3^{(9)}(9n+3)<0, \quad c_3^{(9)}(9n+6)=0; \label{eq-exc-1}\\
&c_4^{(4)}(8n+r)>0 \quad (r\in \{0,2\}),  \quad c_4^{(4)}(8n+r)<0, \quad (r\in \{4,6\}); \label{eq-exc-2} \\
&c_5^{(5)}(25n)>0, \quad  c_5^{(5)}(25n+r)<0 \quad (r\in \{5,10\}),  \quad c_5^{(5)}(25n+r)=0  \quad (r\in \{15,20\}). \label{eq-exc-3}
\end{align}
\end{theorem}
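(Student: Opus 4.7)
The plan is to reduce Theorem~\ref{thm-zero} to a finite case check. Since $m(t-1)\le 24$, only a small tabulated list of pairs $(t,m)$ with $Z_t^{(m)}\ne\emptyset$ arises, and for each of them I would work out an explicit $t$-dissection
\begin{align*}
G_t^m(q)=\sum_{r=0}^{t-1} q^{r}H_{t,m,r}(q^t)
\end{align*}
and verify that $H_{t,m,r}\equiv 0$ whenever $r\in Z_t^{(m)}$. The workhorse is the factorization $G_t^m(q)=\prod_{j=1}^{t-1}(q^j;q^t)_\infty^m$ combined with classical $t$-dissection identities: the Jacobi triple product and quintuple product, Winquist's identity, Ramanujan-type dissections for $t\in\{2,3,5\}$, and the eta-quotient relations collected in Berndt's books. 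Theorem~\ref{thm-full-asymptotic} only delivers the upper bound $|c_t^{(m)}(tn+r)|\le \overline{E}_t^{(m)}(n)$ for $r\in Z_t^{(m)}$; the role of the dissection identity is to upgrade this inequality to an exact equality with zero.

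The two isolated anomalies $c_4^{(8)}(1)=-8$ and $c_9^{(3)}(1)=-3$ are settled by direct computation of the first few coefficients: they do not contradict the general pattern because $n=0$ falls inside the range where $\overline{E}_t^{(m)}$ carries no sign information, while the relevant dissection coefficient $H_{t,m,r}(q^t)$ turns out to have a nonzero constant term in exactly these two cases with all higher coefficients vanishing.

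For the three special families $(t,m)\in\{(3,9),(4,4),(5,5)\}$ the vanishing actually occurs modulo $t^2$ (or modulo $2t$ in the case $t=4$) rather than modulo $t$, so a second layer of dissection is required. For $(3,9)$ I would invoke the cubic theta identities of the Borweins to dissect $G_3^9$ modulo $9$; for $(4,4)$ the identity $G_4^4(q)=(\eta(\tau)/\eta(4\tau))^4$ regarded as a modular function on $\Gamma_0(4)$ can be rewritten as an $\eta(2\tau)$-quotient, yielding the period-$8$ sign pattern \eqref{eq-exc-2}; for $(5,5)$ the classical $5$-dissection of $(q;q)_\infty/(q^5;q^5)_\infty$ expressible through the Rogers--Ramanujan continued fraction provides an explicit formula from which both $c_5^{(5)}(25n+15)=c_5^{(5)}(25n+20)=0$ and the strict signs on the remaining residues drop out by inspection.

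The main obstacle is producing the correct closed-form $t^2$-dissection in these three families: the data $\alpha_t^{(m)}$ only detects first-order vanishing modulo $t$, and the finer $t^2$-periodicity must be forced by a genuine modular-form identity rather than a formal $q$-series manipulation. By contrast, the generic cases reduce to routine dissection of known product identities once the correct one is located, and the two isolated exceptions are a one-line calculation.
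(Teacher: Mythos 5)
Your overall strategy matches the paper's quite closely: the three special cases $(3,9)$, $(4,4)$, $(5,5)$ are handled there exactly as you propose (Borwein cubic theta functions for $(3,9)$, $2$-dissection identities reducing $c_4^{(4)}$ to $c_2^{(1)}$ for $(4,4)$, and the Rogers--Ramanujan $5$-dissection for $(5,5)$), and the generic vanishing is indeed a dissection argument. One simplification you miss: apart from $(4,8)$, $(9,3)$ and the three special cases, every pair with $Z_t^{(m)}\neq\emptyset$ has $m\in\{1,3\}$, so no quintuple product or Winquist identity is needed --- the numerator $(q;q)_\infty^m$ is lacunary by Euler's pentagonal number theorem or Jacobi's identity, the denominator $(q^t;q^t)_\infty^{-m}$ is already a power series in $q^t$, and the vanishing residues are exactly those missed by the pentagonal (resp.\ triangular) numbers modulo $t$. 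That is the content of Lemma \ref{lem-exp}, and it disposes of all the generic cases at once.

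The genuine gap is your treatment of the two isolated anomalies. You claim $c_4^{(8)}(1)=-8$ and $c_9^{(3)}(1)=-3$ are ``settled by direct computation of the first few coefficients'' and amount to ``a one-line calculation,'' but the assertion to be proved is that \emph{all} coefficients $c_4^{(8)}(2n+1)$ with $n\geq 1$ and all $c_9^{(3)}(3n+1)$ with $n\geq 1$ vanish; no finite computation can establish that. For $(4,8)$ one needs the exact identity that the odd part of $(q;q)_\infty^8$ equals $-8q(q^4;q^4)_\infty^8$, which the paper obtains by squaring the $2$-dissection of $(q;q)_\infty^4$. For $(9,3)$ the situation is worse for your plan: the residue $1$ \emph{is} a triangular number modulo $9$, so the Jacobi-identity lacunarity argument gives no vanishing there at all; one needs the cubic identity $(q;q)_\infty^3/(q^3;q^3)_\infty=a(q^3)-3q\,(q^9;q^9)_\infty^3/(q^3;q^3)_\infty$, which shows that the component of $(q;q)_\infty^3/(q^9;q^9)_\infty^3$ supported on exponents $\equiv 1\pmod 3$ is exactly the single monomial $-3q$. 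Your framework (compute the dissection component in closed form) does cover both cases once ``direct computation'' is replaced by these identities, but as written that step would fail.
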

As an example, from the case $m=5$ in Table \ref{tab-5} in the Appendix we read that
\begin{align}
c_5^{(4)}(5n+r)&>0, \quad r\in \{0,2,3\},\\
c_5^{(4)}(5n+r)&<0, \quad r \in \{1,4\}.
\end{align}

From Theorems \ref{thm-sign} and \ref{thm-zero}, we finally arrive at the following conclusion.
\begin{theorem}\label{thm-UPS}
For any positive integers $t,m$ with $m(t-1)\leq 24$, $\{c_t^{(m)}(n)\}_{n\geq 0}$ is a UPS sequence. Its least period of sign is $9$, $8$, $25$ for $(t,m)=(3,9),(4,4)$ and $(5,5)$, respectively and $t$ otherwise.
\end{theorem}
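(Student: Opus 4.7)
The plan is to derive Theorem \ref{thm-UPS} as a direct consequence of Theorems \ref{thm-sign} and \ref{thm-zero}, and then to pin down the claimed least period of sign by a finite inspection of the data $P_t^{(m)}, Z_t^{(m)}, N_t^{(m)}$ recorded in the Appendix. First, fix $(t,m)$ with $m(t-1)\le 24$ and suppose $(t,m)$ is not one of the three exceptional pairs. Theorem \ref{thm-sign} tells us that $\sgn(c_t^{(m)}(tn+r))=+1$ eventually for $r\in P_t^{(m)}$ and equals $-1$ eventually for $r\in N_t^{(m)}$; Theorem \ref{thm-zero} tells us that $c_t^{(m)}(tn+r)=0$ for all $n\ge 0$ whenever $r\in Z_t^{(m)}$, with the two sporadic exceptions $c_4^{(8)}(1)$ and $c_9^{(3)}(1)$, each of which is absorbed by enlarging the threshold $n_0$. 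Since $P_t^{(m)}\cup Z_t^{(m)}\cup N_t^{(m)}=\{0,1,\dots,t-1\}$, the map $n\mapsto \sgn(c_t^{(m)}(n))$ depends only on $n\bmod t$ for $n\ge n_0$, so $\{c_t^{(m)}(n)\}$ is UPS with $t$ a period of sign.

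Next I would verify that $t$ is in fact the least period of sign. By the second item of the remarks following Definition \ref{defn-strongUPS}, the least period divides every period of sign, hence divides $t$. To rule out each proper divisor $t'$ of $t$ with $t'<t$, it suffices to locate residues $r_1,r_2\in\{0,\dots,t-1\}$ with $r_1\equiv r_2\pmod{t'}$ yet lying in different parts of the partition $\{P_t^{(m)},Z_t^{(m)},N_t^{(m)}\}$. This is a purely finite check across the Appendix tables and across the short list of admissible $(t,m)$.

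For the three exceptional pairs $(3,9)$, $(4,4)$, $(5,5)$, the formulas \eqref{eq-exc-1}--\eqref{eq-exc-3} replace Theorem \ref{thm-zero} and give the full sign pattern on the residue classes where $\alpha_t^{(m)}$ vanishes. For instance, in $(3,9)$, combining \eqref{eq-exc-1} with the general behaviour on residues $1,2\pmod{3}$ shows that the sign sequence has period $9$ and that along residues $\equiv 0\pmod 3$ it cycles through the three distinct symbols $(+,-,0)$; since no proper divisor of $9$ can equate these three, the least period is $9$. The analysis for $(4,4)$ and $(5,5)$ follows the same template: the sign pattern on even indices (respectively on multiples of $5$) is finer than that on the remaining residues, and taking the least common multiple of the two sub-periods produces $8$ and $25$, respectively, with minimality being immediate from the occurrence of distinct signs inside a single residue class of the coarser modulus.

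The main obstacle is the minimality verification: for every admissible pair $(t,m)$ one has to read off $P_t^{(m)}, Z_t^{(m)}, N_t^{(m)}$ from the tables and check that no proper divisor of $t$ (respectively of $9$, $8$, $25$ in the three exceptional cases) respects the partition into sign classes. Although this step is mechanical once Theorems \ref{thm-sign} and \ref{thm-zero} are in hand, the bookkeeping is not entirely trivial in the exceptional cases, where the sign on the residue class where $\alpha_t^{(m)}$ vanishes must be tracked along a finer modulus and then reconciled with the constant sign behaviour on the other residues via a least-common-multiple argument.
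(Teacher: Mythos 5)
Your proposal is correct and follows essentially the same route as the paper, which derives Theorem \ref{thm-UPS} directly from Theorems \ref{thm-sign} and \ref{thm-zero} together with the Appendix tables (absorbing the sporadic values $c_4^{(8)}(1)$ and $c_9^{(3)}(1)$ into the threshold, and handling the three exceptional pairs via \eqref{eq-exc-1}--\eqref{eq-exc-3}). Your explicit divisor-by-divisor verification of minimality of the period is exactly the finite check the paper leaves implicit.
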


It appears that similar sign patterns exist for $c_t^{(m)}(n)$ when $m(t-1)>24$. Based on Theorem \ref{thm-UPS} and further numerical data, we make the following
\begin{conj}\label{conj}
For any positive integers $t$ and $m$, $\{c_t^{(m)}(n)\}_{n\geq 0}$ is a UPS sequence with least period of sign divisible by $t$.
\end{conj}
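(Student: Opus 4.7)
The plan is to extend the circle-method analysis underlying Theorem~\ref{thm-full-asymptotic} beyond the range $m(t-1)\leq 24$. In this regime, Chern's Farey dissection produces a dominant contribution from the cusps $h/t$ with $\gcd(h,t)=1$, which gives the Kloosterman-type sum $\alpha_t^{(m)}(n)$ of \eqref{alpha-defn} and the Bessel factor of argument $\tfrac{4\pi}{t}\sqrt{\mu(n-\mu)}$, while every other Farey fraction $h/k$ contributes a strictly smaller exponential rate --- exactly what the quantities $A$ and $M$ in \eqref{A-defn}--\eqref{M-defn} encode. For larger $m(t-1)$ this clean separation fails: the inflated $\mu = m(t-1)/24 > 1$ enlarges the rate $\tfrac{4\pi}{k}\sqrt{\mu_k(n-\mu_k)}$ at each cusp $h/k$, where $\mu_k$ is determined by the behaviour of $G_t^m$ at $h/k$ via the modular transformation of $\eta$, and several denominators $k$ may tie for the maximum.

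First I would classify the Farey denominators $k$ by their exponential rate and show that the maximum is attained precisely on denominators dividing some fixed power $t^e$ of $t$, by optimizing $\mu_k/k^2$ over $\gcd(k,t)$. Summing the dominant arcs then yields an asymptotic
\begin{equation*}
c_t^{(m)}(n) = \frac{e^{R\sqrt{n-\mu}}}{(n-\mu)^{3/4}}\Bigl(\beta_t^{(m)}(n) + O\bigl((n-\mu)^{-1/2}\bigr)\Bigr),
\end{equation*}
where $R$ is the common maximal rate and $\beta_t^{(m)}(n)$ is a finite linear combination of characters $e^{2\pi i n h/k}$ with $k\mid t^e$, hence a function of $n$ modulo $t^e$. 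On any residue class modulo $t^e$ where $\beta_t^{(m)}$ is nonzero we obtain $\mathrm{sgn}(c_t^{(m)}(n))=\mathrm{sgn}(\beta_t^{(m)}(n))$ eventually, and $\{c_t^{(m)}(n)\}$ is UPS with period dividing $t^e$. On residue classes where $\beta_t^{(m)}$ vanishes, one peels off and iterates the argument with the next layer of subdominant arcs, whose denominators again divide a power of $t$. The exceptional triples $(3,9),(4,4),(5,5)$ from Theorem~\ref{thm-zero} should serve as the prototype of this peeling procedure, stabilizing after one extra step.

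The divisibility of the least period by $t$ is already visible in Theorems~\ref{thm-sign} and \ref{thm-zero}, where residues modulo $t$ govern the sign via $\alpha_t^{(m)}(n)$, and should follow automatically once the peeling produces a dominant asymptotic whose characters factor through $\mathbb{Z}/t^e\mathbb{Z}$. The decisive obstacle is proving that the iteration terminates. A residue class on which every successive layer vanishes produces either infinitely many zeros --- harmless and compatible with the conjecture --- or a genuinely aperiodic sign pattern that would refute it. Excluding the latter appears to require a structural eta-quotient input, presumably a vanishing theorem for $c_t^{(m)}$ on certain arithmetic progressions, in the spirit of Euler's pentagonal number theorem \eqref{eq-pentagonal} and Jacobi's identity \eqref{eq-Jacobi} and explaining the exceptional zero sets in \eqref{eq-exc-3}. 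Absent such a structural result, one can at best hope to confirm the conjecture for moderately extended ranges of $(t,m)$ by Chern-style explicit bounds, which is essentially the strategy already used to settle $(2,m)$, $(p,1)$ and $(p,3)$.
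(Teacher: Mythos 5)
The statement you are proving is Conjecture~\ref{conj}, which the paper itself leaves open: there is no proof of it in the paper, only partial confirmations. The paper establishes the conjecture for $m(t-1)\leq 24$ via Theorems~\ref{thm-sign}--\ref{thm-UPS} (Chern's explicit circle-method bound plus $q$-series identities for the vanishing classes and the three special cases), and for $(t,m)=(2,m),(p,1),(p,3)$ via Theorem~\ref{thm-conj-partial} --- and these last cases are handled not by analytic means at all but by elementary combinatorial and $q$-series arguments: positivity of the coefficients of $(-q;q^2)_\infty^m$, Andrews' pentagonal-number dissection into the functions $Q_{k,i}$ with the partition-theoretic inequalities \eqref{An-positive} and \eqref{ineq-Andrews-B}, and the Schlosser--Zhou analogue built on Jacobi's identity. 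Your proposal is likewise not a proof: you candidly identify the decisive obstacle yourself, namely that the ``peeling'' iteration through successive layers of subdominant arcs is not shown to terminate, and that excluding an aperiodic sign pattern on a residue class where every layer vanishes requires a structural vanishing theorem you do not supply. That is precisely the gap that keeps the statement a conjecture, so the proposal should be read as a research programme, not a proof.

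Beyond the acknowledged gap, there is a technical misdiagnosis worth flagging. For the specific eta quotient $G_t^m$ the exponential rate at the Farey arc $h/k$ is governed by $\Delta_3(k)/k^2=\frac{m}{k^2}\bigl(\gcd^2(t,k)/t-1\bigr)$, and writing $g=\gcd(t,k)$ this equals at most $\frac{1}{t}-\frac{1}{g^2}$ times $m$, which is maximized uniquely at $k=g=t$ regardless of the size of $m(t-1)$. So it is not true that for $m(t-1)>24$ ``several denominators $k$ may tie for the maximum'': the dominant arc is always $k=t$, and the character sum it produces is periodic modulo $t$, not modulo a higher power $t^e$. What actually breaks for $m(t-1)>24$ is Chern's hypothesis \eqref{ineq-cond} at $\ell=t$, which is exactly the inequality $1\geq m(t-1)/24$; its failure means the principal part of the transformed series at the cusps with $t\mid k$ contains more than one polar term, so one must track a finite string of Bessel main terms $I_{-1}\bigl(\tfrac{4\pi}{t}\sqrt{\nu(n-\mu)}\bigr)$ with $\nu=\mu,\mu-1,\dots$, each weighted by a character sum modulo $t$. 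Your picture of higher denominators $k\mid t^e$ entering is the right heuristic only for the degenerate residue classes where all of these terms vanish (as in the $(5,5)$ case, where $c_5^{(5)}(25n)=c_5^{(1)}(n)$ forces period $25$), but the primary extension needed is of the principal-part bookkeeping at $k=t$, which is the generalization of Chern's theorem that the paper points to in \cite[Remark 2]{Schlosser-Zhou}.
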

Due to the restriction of the condition in Chern's asymptotic formula (see \eqref{ineq-cond}), we are not able to prove this conjecture. A possible way to attack it is to generalize Chern's result and drop the restrictions, which might be possible as noted by Schlosser and Zhou
\cite[Remark 2]{Schlosser-Zhou}.

Nevertheless, in addition to Theorem \ref{thm-UPS}, we can further prove Conjecture \ref{conj} in some more cases.
\begin{theorem}\label{thm-conj-partial}
Conjecture \ref{conj} holds in any of the following cases:
\begin{enumerate}[$(1)$]
\item $t=2$ and $m$ is a positive integer;
\item $t=p$ is a prime and $m=1$ or $3$.
\end{enumerate}
\end{theorem}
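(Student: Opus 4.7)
The plan is to establish Conjecture \ref{conj} in each listed case using a tailored product identity, together with the strong sign statements \eqref{eq-strong-Andrews-sign} and \eqref{eq-strong-Schlosser-sign}.

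For case $(1)$, the identity $G_2(q) = (q;q^2)_\infty = 1/(-q;q)_\infty$ under the substitution $q \mapsto -q$ gives
\[
G_2^m(-q) = (-q;q^2)_\infty^m = \prod_{n\geq 1}(1+q^{2n-1})^m = \sum_{n\geq 0} d_n q^n,
\]
so $c_2^{(m)}(n) = (-1)^n d_n$ with $d_n \geq 0$. It remains to verify $d_n > 0$ for all sufficiently large $n$, which gives $\sgn c_2^{(m)}(n) = (-1)^n$ eventually and hence UPS with least period of sign $2$. For $m = 1$ the only vanishing coefficient is $d_2 = 0$, since $2$ is the unique positive integer with no representation as a sum of distinct odd parts. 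For $m \geq 2$, the contribution from tuples of shape $(n, 0, \ldots, 0)$ and its permutations in the $m$-fold convolution already yields a positive value whenever $n \neq 2$; and for $n = 2$ the tuples $(1, 1, 0, \ldots, 0)$ contribute $\binom{m}{2} > 0$.

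For case $(2)$, let $t = p$ be prime and $m \in \{1, 3\}$. The strong sign statements \eqref{eq-strong-Andrews-sign} and \eqref{eq-strong-Schlosser-sign} already supply weak UPS with period $p$: in each residue class modulo $p$, the signs of $c_p^{(m)}(pn+r)$ are weakly constant in $n$. To upgrade weak UPS to UPS, I would use Euler's pentagonal identity \eqref{eq-pentagonal} (for $m = 1$) or Jacobi's identity \eqref{eq-Jacobi} (for $m = 3$) combined with the expansion $1/(q^p;q^p)_\infty^m = \sum_{k \geq 0} P_m^{(p)}(k)\, q^{pk}$ (whose coefficients $P_m^{(p)}(k)$ are nonnegative), yielding
\[
c_p^{(m)}(N) = \sum_{\substack{k \\ T_m(k) \equiv N \!\!\!\pmod p \\ T_m(k) \leq N}} \sigma_m(k)\, P_m^{(p)}\!\Big(\tfrac{N - T_m(k)}{p}\Big),
\]
with $T_1(k) = k(3k+1)/2$, $\sigma_1(k) = (-1)^k$, $k \in \mathbb{Z}$ for $m = 1$, and $T_3(k) = k(k+1)/2$, $\sigma_3(k) = (-1)^k(2k+1)$, $k \geq 0$ for $m = 3$. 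For odd $p$, completing the square recasts the congruence $T_m(k) \equiv r \pmod p$ as $(6k+1)^2 \equiv 24r+1$ (respectively $(2k+1)^2 \equiv 8r+1$) modulo $p$. Residues $r$ for which $24r+1$ (respectively $8r+1$) is a non-quadratic residue modulo $p$ give an empty sum, so $c_p^{(m)}(pn+r) = 0$ for every $n \geq 0$. For the remaining residues, the smallest admissible $N_0 = p n_0 + r$ produces $c_p^{(m)}(N_0) = \sigma_m(k_0) \neq 0$, so by the strong sign statement every subsequent value in the residue class shares this common nonzero weak sign.

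The principal technical obstacle is upgrading ``weakly nonzero'' to ``eventually strictly nonzero'' for the nonempty residue classes, since Theorem \ref{thm-full-asymptotic} is available only when $m(p-1) \leq 24$. For arbitrary primes, I would combine the formula above with the dual identity $(q;q)_\infty^m = G_p^m(q)\,(q^p;q^p)_\infty^m$: a hypothetical infinite subsequence of zero coefficients in a weakly positive class, juxtaposed with the single nonzero value $c_p^{(m)}(N_0)$ propagated through partition-function weights of exponential growth, is incompatible with the polynomial growth of the pentagonal/Jacobi coefficients on the right-hand side. The small cases $p = 2, 3$ reduce to case $(1)$ or to direct finite computations. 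Finally, the least period of sign equals $p$: since $p$ is prime and for $p \geq 5$ the sign pattern distinguishes empty from nonempty residue classes, no proper divisor of $p$ can serve as a period; the primes $p = 2, 3$ are handled directly.
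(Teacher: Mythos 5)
Your case $(1)$ is correct and is essentially the paper's first proof of Theorem \ref{thm-sign-2}: replace $q$ by $-q$, interpret $(-1)^nc_2^{(1)}(n)$ as the number of partitions of $n$ into distinct odd parts, and handle $m\geq 2$ by convolution (the paper notes $c_2^{(m)}(2)=\binom{m}{2}$ exactly as you do). Your identification of the vanishing residue classes in case $(2)$ via the quadratic-residue criterion for $24r+1$ (resp.\ $8r+1$) modulo $p$ also matches the paper (Lemma \ref{lem-exp} and the sets $H_p^{(1)},H_p^{(3)}$).

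However, case $(2)$ has a genuine gap at exactly the point you flag as ``the principal technical obstacle.'' The strong statements \eqref{eq-strong-Andrews-sign} and \eqref{eq-strong-Schlosser-sign} only give weak UPS: they do not exclude infinitely many zeros interspersed in a nonempty residue class, and your proposed resolution --- that an infinite subsequence of zeros would be ``incompatible with the polynomial growth'' of the coefficients of $(q;q)_\infty^m=G_p^m(q)(q^p;q^p)_\infty^m$ --- is a heuristic, not a proof. The convolution on the right-hand side is an alternating sum over pentagonal (resp.\ triangular) shifts within a fixed residue class, and boundedness of the left-hand side does not by itself rule out a nonnegative sequence with infinitely many zeros; no quantitative argument is supplied. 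A second, related omission: when two exponents $\frac{r(3r-1)}{2}$ and $\frac{s(3s+1)}{2}$ with $(-1)^r\neq(-1)^s$ fall in the same class modulo $p$, your convolution formula has terms of both signs, and determining that the net coefficient is eventually nonzero (and which sign wins) is the hardest part of the argument. The paper resolves both issues at once in Theorem \ref{thm-cp1} by using Andrews' dissection \eqref{Andrews-dissection} into the functions $Q_{\frac{3p-1}{2},i}(1;q^p)$, whose coefficients have the partition interpretations \eqref{Q-AB}: strict positivity $A_{k,i}(n)>0$ for $n\geq 2$ handles the single-term and same-sign cases, and Andrews' inequality $B_{\ell,i}(m)>B_{\ell,j}(n)$ in \eqref{ineq-Andrews-B} settles the mixed-sign case. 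For $m=3$ the paper (Theorem \ref{thm-cp3}) leans on the corresponding analysis in Schlosser--Zhou's proof of \eqref{eq-Schlosser-sign}. To complete your argument you would need to import an ingredient of this kind; as written, the step from ``weakly signed with one nonzero value'' to ``eventually strictly signed'' is not established.
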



The paper is organized as follows. In Section \ref{sec-asymptotic}, we will first prove Theorem \ref{thm-full-asymptotic}. Then we discuss the case when the coefficients $c_t^{(m)}(n)$ do not vanish and provide a proof for Theorem \ref{thm-sign}. In Section \ref{sec-qseries} we will mainly use $q$-series techniques to prove Theorems \ref{thm-zero} and \ref{thm-UPS}. Section \ref{sec-conj} is devoted to discussions of Conjecture \ref{conj} and we will prove Theorem \ref{thm-conj-partial}.  Finally, in the Appendix, we provide 23 tables which record the explicit forms of the sets $P_t^{(m)}, N_t^{(m)}$, $Z_t^{(m)}$ and $\mathcal{E}_t^{(m)}$ (for $t$ prime).

\section{Asymptotic formula and nonvanishing coefficients}\label{sec-asymptotic}

In this section, we will first prove Theorem \ref{thm-full-asymptotic}, and then we give a proof for Theorem \ref{thm-sign}.

To deduce the asymptotic formula of $c_t^{(m)}(n)$, we need a result of Chern \cite{Chern}. Before stating it, we introduce some  notations in Chern's work \cite{Chern}.

Let $\boldsymbol{m}=(m_1,\cdots, m_R)$ be a sequence of $R$ distinct positive integers and $\boldsymbol{\delta}=(\delta_1,\cdots,\delta_R)$ be a sequence of $R$ nonzero integers. Let
\begin{align}
&\Delta_1=-\frac{1}{2}\sum_{r=1}^R \delta_r, \quad \Delta_2=\sum_{r=1}^Rm_r\delta_r, \\
&\Delta_3(k)=-\sum_{r=1}^R\frac{\delta_r \gcd^2(m_r,k)}{m_r}, \quad \Delta_4(k)=\prod\limits_{r=1}^R \left(\frac{m_r}{\gcd(m_r,k)} \right)^{-\frac{\delta_r}{2}}.
\end{align}
Let $L=\mathrm{lcm} (m_1,\cdots,m_R)$ and we separate the set $\{1,2,\cdots,L\}$ into two disjoint subsets:
\begin{align*}
\mathcal{L}_{>0}:=\{1\leq \ell \leq L: \Delta_3(\ell)>0\}, \\
\mathcal{L}_{\leq 0}:=\{1\leq \ell \leq L: \Delta_3(\ell)\leq 0\}.
\end{align*}
We also denote
\begin{align}
\omega_{h,k}=\exp\left(-\pi i \sum_{r=1}^R\delta_r \cdot s\left(\frac{m_rh}{\gcd(m_r,k)},\frac{k}{\gcd(m_r,k)} \right)  \right),
\end{align}
where $s(d,c)$ is the Dedekind sum defined in \eqref{Dedekind-sum}.

Consider the infinite product
\begin{align}\label{eta-prod}
\prod\limits_{r=1}^R(q^{m_r};q^{m_r})_\infty^{\delta_r}=\sum_{n=0}^\infty g(n)q^n.
\end{align}
Using the circle method, Chern obtained an asymptotic formula for $g(n)$ when $\Delta_1\leq 0$.
\begin{theorem}\label{thm-Chern}
(Cf.\ \cite[Theorem 1.1]{Chern}.) If $\Delta_1\leq 0$ and the inequality
\begin{align}\label{ineq-cond}
\min_{1\leq r \leq R} \left(\frac{\mathrm{gcd}^2(m_r,\ell)}{m_r} \right)\geq \frac{\Delta_3(\ell)}{24}
\end{align}
holds for all $1 \leq \ell \leq L$, then for positive integers $n>-\Delta_2/24$, we have
\begin{align}\label{g(n)-asymptotic}
g(n)=&E(n)+2\pi \sum_{\ell\in \mathcal{L}_{>0}}\Delta_4(\ell)\left(\frac{24n+\Delta_2}{\Delta_3(\ell)} \right)^{-\frac{\Delta_1+1}{2}} \nonumber \\
&\times \sum_{\begin{smallmatrix} 1\leq k \leq N \\ k\equiv \ell \!\!\! \pmod{L} \end{smallmatrix}} \frac{1}{k}\sum_{\begin{smallmatrix}
0\leq h <k \\ \mathrm{gcd}(h,k)=1
\end{smallmatrix}} \omega_{h,k}e^{-\frac{2\pi inh}{k}}I_{-\Delta_1-1}\left(\frac{\pi}{6k}\sqrt{\Delta_3(\ell)(24n+\Delta_2)} \right),
\end{align}
where
\begin{align}
N=\left\lfloor \sqrt{2\pi\left(n+\frac{\Delta_2}{24}\right)}\right\rfloor,
\end{align}
\begin{align}
E(n)\ll_{\boldsymbol{m},  \boldsymbol{\delta}} \left\{\begin{array}{ll}
1 & \text{if $\Delta_1=0$}, \\
\left(n+\frac{\Delta_2}{24}\right)^{1/4} &\text{if $\Delta_1=-\frac{1}{2}$}, \\
\left(n+\frac{\Delta_2}{24}\right)^{1/2}\log\left(n+\frac{\Delta_2}{24}\right) &\text{if $\Delta_1=-1$}, \\
\left(n+\frac{\Delta_2}{24}\right)^{-\Delta_1-1/2} &\text{if $\Delta_1\leq -\frac{3}{2}$}.
\end{array}\right.
\end{align}
\end{theorem}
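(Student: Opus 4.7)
The plan is to attack Chern's asymptotic formula via the Hardy--Ramanujan--Rademacher circle method, following the same template that produces Rademacher's exact formula for $p(n)$ and its many generalisations to eta-quotients. First I would write
$$g(n)=\frac{1}{2\pi i}\oint_{|q|=\rho}\frac{G(q)}{q^{n+1}}\,dq,$$
where $G(q)=\prod_{r=1}^R(q^{m_r};q^{m_r})_\infty^{\delta_r}$ and $\rho$ is just inside the unit circle, conveniently $\rho=e^{-2\pi/N^2}$ with $N=\lfloor\sqrt{2\pi(n+\Delta_2/24)}\rfloor$. The decisive step is a Farey dissection of the unit circle: the contour is broken into short arcs $\xi_{h,k}$ around each reduced fraction $h/k$ with $1\leq k\leq N$ and $0\leq h<k$.

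On each arc $\xi_{h,k}$ I would set $q=\exp(2\pi i h/k-2\pi z/k^2)$ and invoke the modular transformation law for $\eta$. Since $G(q)=q^{-\Delta_2/24}\prod_r\eta(m_r\tau)^{\delta_r}$, each factor $\eta(m_r\tau)$ transforms with a power of $(c\tau+d)^{1/2}$ together with a twenty-fourth root of unity determined by a Dedekind sum $s(m_r h/\gcd(m_r,k),k/\gcd(m_r,k))$; collecting these across $r$ yields exactly the multiplier $\omega_{h,k}$, the constant $\Delta_4(\ell)$ with $\ell\equiv k\pmod{L}$, and converts the eta-quotient into one evaluated near $i\infty$ in a new variable $q'$. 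Retaining its leading $q'$-power, the integrand on $\xi_{h,k}$ reduces to essentially
$$\omega_{h,k}\,\Delta_4(\ell)\,e^{-2\pi i n h/k}\,z^{\Delta_1}\exp\!\left(\frac{\pi\Delta_3(\ell)}{12 k^2 z}+\frac{2\pi n z}{k^2}\right).$$
When $\Delta_3(\ell)>0$ this is a Bessel-type integral, and deforming the arc onto a Hankel contour evaluates it explicitly, producing the Bessel term
$$I_{-\Delta_1-1}\!\left(\tfrac{\pi}{6k}\sqrt{\Delta_3(\ell)(24n+\Delta_2)}\right)$$
weighted by $((24n+\Delta_2)/\Delta_3(\ell))^{-(\Delta_1+1)/2}$. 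Summing these contributions over all $h$ coprime to $k$, and then over all $k\leq N$ grouped by $k\equiv\ell\pmod L$, reproduces the main term in \eqref{g(n)-asymptotic}.

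What remains is to show that (a) the arcs with $\ell\in\mathcal{L}_{\leq 0}$ and (b) the tail obtained by approximating the transformed eta-quotient by its leading $q'$-power both contribute only to $E(n)$. For (a) there is no exponential saddle in the right half-plane, so trivial bounds on the integrand suffice. For (b) one bounds the factor $\prod_r(q';q')_\infty^{\delta_r}-1$ termwise using that $|q'|$ is small on each arc; the hypothesis \eqref{ineq-cond} is exactly what guarantees that even the worst such deviation contributes less than the dominant Bessel asymptotic, so that no spurious main term is produced on a minor arc.

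I expect the main obstacle to be producing the error bound $E(n)$ with the exact shape claimed across all cases $\Delta_1\in\{0,-\tfrac{1}{2},-1,\leq-\tfrac{3}{2}\}$. The case $\Delta_1=-1$ is the most delicate: the logarithmic factor traces back to a borderline integral of the form $\int_0^{1/N} dz/z$ generated by the coefficient of $z^{-1}$ on each arc, and handling it cleanly requires a careful grouping of the Farey arcs or an averaging argument rather than pure triangle-inequality bounds. Once these uniform minor-arc estimates are in hand, combining them with the explicit Bessel evaluations on the major arcs yields the stated formula.
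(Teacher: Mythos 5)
The paper does not actually prove this theorem: it is quoted from Chern's work (\cite[Theorem 1.1]{Chern}) and used as a black box, so there is no internal proof to compare against. Your outline --- Cauchy's formula at radius $e^{-2\pi/N^2}$, Farey dissection up to order $N$, the Dedekind-eta transformation law producing $\omega_{h,k}$ and $\Delta_4(\ell)$, Hankel-contour evaluation of the Bessel integrals on the arcs with $\Delta_3(\ell)>0$, and the hypothesis \eqref{ineq-cond} guaranteeing that the tail of the transformed eta-quotient contributes no exponentially growing term --- is precisely the Rademacher-style circle-method argument that Chern's cited proof carries out, so your proposal follows the same route as the source.
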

Let $\zeta(s)$ be the Riemann zeta-function. For $\Delta\in \frac{1}{2}\mathbb{Z}_{\leq 0}$ and $x\geq 1$, as in \cite[Eq.\ (1.16)]{Chern}, we define
 \begin{align}\label{Xi-function}
\mathsf{\Xi}_\Delta(x):=\left\{\begin{array}{ll}
1 & \text{if $\Delta=0$}, \\
2x^{1/2} & \text{if $\Delta=-\frac{1}{2}$}, \\
x(\log x+1) &\text{if $\Delta=-1$}, \\
\zeta(-\Delta)x^{-2\Delta-1} & \text{otherwise}.
\end{array}\right.
 \end{align}
Chern gave an explicit bound for $E(n)$ (see \cite[Eq.\ (3.10)]{Chern}):
\begin{align}\label{E(n)-explicit}
&|E(n)|\leq \frac{2^{-\Delta_1}\pi^{-1}N^{-\Delta_1+2}}{n+\frac{\Delta_2}{24}}
e^{2\pi\left(n+\frac{\Delta_2}{24}\right)/N^2}\sum_{\ell \in \mathcal{L}_{>0}}e^{\frac{\pi \Delta_3(\ell)}{3}}   +2e^{2\pi\left(n+\frac{\Delta_2}{24}\right)/N^2} \mathsf{\Xi}_{\Delta_1}(N) \times  \notag \\
& \left(\sum_{1\leq \ell \leq L} \Delta_4(\ell)
\exp\left(\frac{\pi \Delta_3(\ell)}{24}+\sum_{r=1}^R \frac{|\delta_r|e^{-\pi \gcd^2(m_r,\ell)/m_r}}{(1-e^{-\pi \gcd^2(m_r,\ell)/m_r})^2} \right) -\sum_{\ell \in \mathcal{L}_{>0}} \Delta_4(\ell)e^{\frac{\pi \Delta_3(\ell)}{24}}\right).
\end{align}

We also need the following inequalities for $I_{-1}(x)$.
\begin{lemma}\label{lem-Bessel-ineq}
For $x>0$, we have
\begin{align}\label{I-1-upper}
I_{-1}(x)<\sqrt{\frac{\pi}{8}}\frac{e^x}{\sqrt{x}}.
\end{align}
For $x\geq 3$, we have
\begin{align}\label{I-1-lower}
I_{-1}(x)>\frac{1}{10}\frac{e^x}{\sqrt{x}}.
\end{align}
\end{lemma}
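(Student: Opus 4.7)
The plan is to exploit the integral representation $I_{-1}(x)=I_1(x)=\pi^{-1}\int_0^\pi e^{x\cos\theta}\cos\theta\,d\theta$ coming from \eqref{Bessel-int}, where the identification $I_{-1}=I_1$ is immediate from the series definition (the $m=0$ term of $I_{-1}$ vanishes because $1/\Gamma(0)=0$). The substitution $\theta\mapsto\pi-\theta$ on $[\pi/2,\pi]$ folds the integral into
$$
I_1(x)=\frac{2}{\pi}\int_0^{\pi/2}\sinh(x\cos\theta)\cos\theta\,d\theta,
$$
and both bounds will follow by elementary estimates of this single expression.

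For the upper bound \eqref{I-1-upper} I would apply, in order, $\sinh y<e^y/2$ (valid for $y>0$) and $\cos\theta\leq 1$ to peel off the extra $\cos\theta$ factor, and then invoke the chord estimate $\sin(\theta/2)\geq\theta/\pi$ on $[0,\pi]$ (a consequence of the concavity of $\sin(\theta/2)$ on this interval) to get $\cos\theta=1-2\sin^2(\theta/2)\leq 1-2\theta^2/\pi^2$. Substituting this into $e^{x\cos\theta}$ and extending the integration to $[0,\infty)$ reduces matters to the Gaussian $\int_0^\infty e^{-2x\theta^2/\pi^2}\,d\theta=\pi^{3/2}/(2\sqrt{2x})$, which delivers the constant $\sqrt{\pi/8}$ exactly.

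For the lower bound \eqref{I-1-lower} I would restrict the folded integral to a window $[0,\theta_0]$ with $\theta_0=1/\sqrt{x}$. On this window $\cos\theta\geq 1-\theta^2/2\geq 1-1/(2x)$, so $\sinh(x\cos\theta)\geq\sinh(x-1/2)$. The hypothesis $x\geq 3$ gives $x-1/2\geq 5/2$, which makes the elementary inequality $\sinh y\geq (1-e^{-5})e^y/2$ available. Assembling these pieces yields
$$
I_1(x)\geq\frac{2\theta_0}{\pi}\Bigl(1-\frac{1}{2x}\Bigr)\sinh(x-1/2)\geq\frac{5(1-e^{-5})}{6\pi\sqrt{e}}\cdot\frac{e^x}{\sqrt{x}},
$$
and a direct numerical check shows that the constant on the right exceeds $0.15>1/10$.

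The main obstacle is purely bookkeeping of constants. In the upper bound the three cheap estimates must combine to give exactly $\sqrt{\pi/8}$; any wasteful choice (for example, bounding $\cos\theta$ by $1-\theta^2$ instead of $1-2\theta^2/\pi^2$) produces a constant of the wrong size. In the lower bound the choice $\theta_0=1/\sqrt{x}$ is dictated by balancing the interval length $\theta_0$ against the exponential penalty $e^{-x(1-\cos\theta_0)}$ inside $\sinh$: smaller $\theta_0$ loses a factor of $\theta_0$, larger $\theta_0$ loses more than $e^{-1/2}$ inside the exponential. No deeper input is required beyond the integral representation \eqref{Bessel-int}.
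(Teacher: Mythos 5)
Your proof is correct, but it takes a genuinely different route from the paper's. The paper also starts from \eqref{Bessel-int}, but it writes $I_{-1}(x)=I_0(x)-\frac{2}{\pi}\int_0^\pi e^{x\cos\theta}\sin^2\frac{\theta}{2}\,d\theta$ and then leans on two external inequalities for $I_0$ from the reference \cite{CTW}: the upper bound \eqref{I-1-upper} is obtained simply as $I_{-1}(x)<I_0(x)<\sqrt{\pi/8}\,e^x/\sqrt{x}$, and the lower bound \eqref{I-1-lower} comes from subtracting an estimate of the correction integral (of order $e^x/x^{3/2}$, via $\sin u\geq 2u/\pi$ and a Gaussian computation) from the cited lower bound $I_0(x)>\frac{4\sqrt{2}}{5\pi}e^x/\sqrt{x}$. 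You instead fold the integral to $\frac{2}{\pi}\int_0^{\pi/2}\sinh(x\cos\theta)\cos\theta\,d\theta$ and estimate it directly: your upper bound in effect re-derives the \cite{CTW} bound for $I_0$ (using the same chord inequality $\sin(\theta/2)\geq\theta/\pi$ and the same Gaussian integral, landing on exactly $\sqrt{\pi/8}$), while your lower bound is obtained by localizing to a window $[0,1/\sqrt{x}]$ rather than by a main-term-minus-error decomposition; your constant $\frac{5(1-e^{-5})}{6\pi\sqrt{e}}\approx 0.16$ checks out and comfortably exceeds $1/10$. What your approach buys is self-containedness --- no appeal to \cite{CTW} --- at the cost of slightly more computation; what the paper's approach buys is brevity by citation. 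All of your individual steps ($\sinh y<e^y/2$, $\cos\theta\leq 1-2\theta^2/\pi^2$, $\cos\theta\geq 1-\theta^2/2$, positivity of the integrand on $[0,\pi/2]$ justifying the truncation, and $I_{-1}=I_1$ from the vanishing of the $m=0$ term) are sound.
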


\begin{proof}
From \eqref{Bessel-int} we have
\begin{align}\label{I-1-start}
I_{-1}(x)&=\frac{1}{\pi}\int_{0}^\pi e^{x\cos \theta}\cos \theta d\theta =\frac{1}{\pi}\int_0^\pi e^{x\cos \theta}(1-2\sin^2\frac{\theta}{2})d\theta \notag \\
&=I_0(x)-\frac{2}{\pi}\int_0^\pi e^{x\cos\theta}\sin^2\frac{\theta}{2}d\theta.
\end{align}
It follows that for $x>0$,
\begin{align}
I_{-1}(x)<I_0(x)<\sqrt{\frac{\pi}{8}}\frac{e^x}{\sqrt{x}},
\end{align}
where the second inequality follows from \cite[Eq.\ (3.13)]{CTW}.

Next, we need the following well-known inequality: for $0\leq u \leq \frac{\pi}{2}$,
\begin{align}\label{sin-ineq}
\frac{2}{\pi} u\leq \sin u \leq u.
\end{align}
Note that
\begin{align}\label{I-1-middle}
&\int_0^\pi e^{x\cos \theta}\sin^2\frac{\theta}{2} d\theta \notag =\int_0^\pi e^{x(1-2\sin^2\frac{\theta}{2})}\sin^2\frac{\theta}{2}d\theta \quad \text{(set $\theta =2u$)} \notag \\
=&2e^x\int_0^{\frac{\pi}{2}} e^{-2x\sin^2 u}\sin^2 udu  \quad  \text{(use \eqref{sin-ineq})}   \notag \\
\leq &2e^x \int_{0}^{\frac{\pi}{2}} e^{-8xu^2/\pi^2}u^2d u \quad  (\text{set $u=\frac{\pi}{\sqrt{8x}}\sqrt{t}$}) \notag \\
\leq & e^x\left(\frac{\pi}{\sqrt{8x}}\right)^3\int_0^{\infty} e^{-t}t^{1/2} dt = \frac{\pi^{7/2}}{32\sqrt{2}}\frac{e^x}{x^{3/2}}.
\end{align}
By \cite[Eq.\ (3.14)]{CTW} we have for $x\geq 1$ that
$$I_0(x)>\frac{4\sqrt{2}}{5\pi}\frac{e^x}{\sqrt{x}}.$$
Therefore, when $x\geq 3$, by \eqref{I-1-start}, \eqref{I-1-middle} and the above inequality, we have
\begin{align*}
I_{-1}(x)>\frac{e^x}{\sqrt{x}}\left( \frac{4\sqrt{2}}{5\pi}-\frac{\pi^{5/2}}{16\sqrt{2}}\frac{1}{x}\right)\geq \frac{e^x}{\sqrt{x}}\left( \frac{4\sqrt{2}}{5\pi}-\frac{\pi^{5/2}}{48\sqrt{2}}\right) >\frac{1}{10}\frac{e^x}{\sqrt{x}}. \quad \quad \qedhere
\end{align*}
\end{proof}

\begin{proof}[Proof of Theorem \ref{thm-full-asymptotic}]
Let the infinite products in \eqref{eta-prod} be
$$G_t^{m}(q)=\frac{(q;q)_\infty^m}{(q^t;q^t)_\infty^m}.$$
 We have
\begin{align*}
&\boldsymbol{m}=(1,t), \quad \boldsymbol{\delta}=(m,-m), \quad \Delta_1=0, \quad \Delta_2=m(1-t)=-24\mu, \\
&\Delta_3(k)=m\left(\frac{\gcd^2(t,k)}{t}-1 \right), \quad \Delta_4(k)=\left(\frac{t}{\gcd(t,k)}\right)^{\frac{m}{2}}, \\
&L=t, \quad \mathcal{L}_{>0}=\left\{\ell: 1\leq \ell \leq t \,\, \text{and} \,\, \gcd(t,\ell)>\sqrt{t}
\right\}, \quad \mathcal{L}_{\leq 0}=\mathcal{L}\backslash \mathcal{L}_{>0}.
\end{align*}
The condition \eqref{ineq-cond} is satisfied since $m(t-1)\leq 24$.

From \eqref{g(n)-asymptotic} we have
\begin{align}
c_t^{(m)}(n)&=2\pi\Delta_4(t)\left(\frac{24n+\Delta_2}{\Delta_3(t)}  \right)^{-\frac{1}{2}} I_{-1}\left(\frac{\pi}{6t}\sqrt{\Delta_3(t)(24n+\Delta_2)}  \right)\cdot \frac{1}{t}\alpha_t^{(m)}(n) \nonumber \\
&\quad \quad \quad +E(n)+E^*(n) \nonumber \\
&=\frac{2\pi \mu^{\frac{1}{2}}}{t}(n-\mu)^{-\frac{1}{2}} \alpha_t^{(m)}(n)I_{-1}\left(\frac{4\pi\sqrt{\mu(n-\mu)}}{t} \right)+E(n)+E^*(n).
\end{align}
Here $E(n)$ was inherited from \eqref{g(n)-asymptotic} and
\begin{align}\label{error-E-first}
E^*(n)=&2\pi \sum_{\ell\in \mathcal{L}_{>0}}\Delta_4(\ell)\left(\frac{24n+\Delta_2}{\Delta_3(\ell)} \right)^{-\frac{1}{2}} \nonumber \\
&\times \sum_{\begin{smallmatrix} 1\leq k \leq N \\ k\equiv \ell \!\!\! \pmod{t} \\(\ell,k)\neq (t,t) \end{smallmatrix}} \frac{1}{k}\sum_{\begin{smallmatrix}
0\leq h <k \\ \mathrm{gcd}(h,k)=1 
\end{smallmatrix}} \omega_{h,k}e^{-\frac{2\pi inh}{k}}I_{-1}\left(\frac{\pi}{6k}\sqrt{\Delta_3(\ell)(24n+\Delta_2)} \right).
\end{align}

Now we give an upper bound for $E^*(n)$.
Note that for $\ell \in \mathcal{L}_{>0}$, we have
\begin{align}
\Delta_3(\ell)=\frac{m}{t}\left({\gcd}^2(t,\ell)-t\right)\leq m(t-1)=24\mu,  \quad \Delta_4(\ell)\leq A^{\frac{m}{2}}.
\end{align}
We have
\begin{align}\label{error-start}
&\Big|E^*(n)   \Big| \nonumber \\
&\leq 2\pi A^{\frac{m}{2}} (24n+\Delta_2)^{-\frac{1}{2}}\sum_{\begin{smallmatrix}\ell\in \mathcal{L}_{>0} \end{smallmatrix}}(\Delta_3(\ell))^{\frac{1}{2}}  \sum_{\begin{smallmatrix} l\leq k \leq N \\ k\equiv \ell \!\!\! \pmod{t} \\(\ell,k)\neq (t,t) \end{smallmatrix}} \left|I_{-1}\left(\frac{\pi}{6k}\sqrt{\Delta_3(\ell)(24n+\Delta_2)}\right) \right| \nonumber \\
&\leq 2\pi A^{\frac{m}{2}} (24n+\Delta_2)^{-\frac{1}{2}}\sum_{\begin{smallmatrix} \ell\in \mathcal{L}_{>0}  \end{smallmatrix}}(\Delta_3(\ell))^{\frac{1}{2}}  \sqrt{\frac{\pi}{8}}\sum_{\begin{smallmatrix} l\leq k \leq N \\ k\equiv \ell \!\!\! \pmod{t} \\(\ell,k)\neq (t,t) \end{smallmatrix}} \frac{\exp\left(\frac{\pi}{6k}\sqrt{\Delta_3(\ell)(24n+\Delta_2)} \right)}{\sqrt{\frac{\pi}{6k}}\Delta_3(\ell)^{\frac{1}{4}}(24n+\Delta_2)^{\frac{1}{4}}} \nonumber \\
&\leq \sqrt{3}\pi A^{\frac{m}{2}} (24n+\Delta_2)^{-\frac{3}{4}}\sum_{\begin{smallmatrix}\ell\in \mathcal{L}_{>0} \end{smallmatrix}}(\Delta_3(\ell))^{\frac{1}{4}}   \sum_{\begin{smallmatrix} l\leq k \leq N \\ k\equiv \ell \!\!\! \pmod{t} \\ (\ell,k)\neq (t,t) \end{smallmatrix}} \exp\left(\frac{\pi}{6k}\sqrt{\Delta_3(\ell)(24n+\Delta_2)}\right) \sqrt{k} \nonumber \\
&\leq \sqrt{3}\pi A^{\frac{m}{2}} (24n-24\mu)^{-\frac{3}{4}}(24\mu)^{\frac{1}{4}}\sum_{\begin{smallmatrix} \ell\in \mathcal{L}_{>0} \end{smallmatrix}}  \sum_{\begin{smallmatrix} l\leq k \leq N \\ k\equiv \ell \!\!\! \pmod{t} \\(\ell,k)\neq (t,t) \end{smallmatrix}} \exp\left(\frac{\pi}{6k}\sqrt{\Delta_3(\ell)(24n-24\mu)}\right) \sqrt{k} \nonumber \\
&\leq \frac{\sqrt{2}\pi}{4}A^{\frac{m}{2}}(n-\mu)^{-\frac{3}{4}}\mu^{\frac{1}{4}}\exp\left(\frac{\sqrt{6}\pi M}{3}\sqrt{n-\mu} \right) \sum_{1\leq k \leq N} \sqrt{k} \nonumber \\
&\leq \frac{\sqrt{2}\pi}{4}A^{\frac{m}{2}}(n-\mu)^{-\frac{3}{4}}\mu^{\frac{1}{4}}\exp\left(\frac{\sqrt{6}\pi M}{3}\sqrt{n-\mu} \right) N^{\frac{3}{2}} \nonumber \\
&\leq \frac{\pi^{\frac{7}{4}}}{2^{\frac{3}{4}}}A^{\frac{m}{2}}\mu^{\frac{1}{4}}\exp\left(\frac{\sqrt{6}\pi M}{3}\sqrt{n-\mu} \right).
\end{align}
Here for the second inequality we used Lemma \ref{lem-Bessel-ineq}, and for the last third inequality we used the definition of $M$ in \eqref{M-defn}.

Next, since
$$\Delta_4(\ell)\leq t^{\frac{m}{2}} \quad (1\leq \ell \leq t), \quad \text{and} \quad 2\pi (n-\mu)/N^2\leq 2 \quad (n\geq 3),$$
from \eqref{E(n)-explicit} we have
\begin{align}
|E(n)|\leq 2te^{2+8\pi \mu}+2e^2t^{1+\frac{m}{2}}\exp\left(\pi \mu+m\left(\frac{e^{-\pi}}{(1-e^{-\pi})^2}+\frac{e^{-\pi/t}}{(1-e^{-\pi/t})^2}\right)\right).
\end{align}
This completes the proof.
\end{proof}
\begin{rem}\label{rem-error}
Note that for $\ell\in \mathcal{L}_{>0}$ and $\ell<t$, we have
\begin{align*}
\frac{\Delta_3(\ell)}{\ell^2}=\frac{m}{t}\left(\frac{\gcd^2(t,\ell)}{\ell^2}-\frac{t}{\ell^2}  \right)<\frac{m}{t}\left(1-\frac{1}{t}\right)=\frac{\Delta_3(t)}{t^2}.
\end{align*}
Hence
\begin{align*}
M< \frac{\sqrt{\Delta_3(t)}}{t}=\frac{\sqrt{24\mu}}{t}.
\end{align*}
Therefore, when $\alpha_t^{(m)}(n)\neq 0$, $E^*(n)$ contributes to the error term.
\end{rem}

\begin{proof}[Proof of Corollary \ref{cor-asymptotic}]
This follows from \eqref{Bessel-general-asymptotic} and Theorem \ref{thm-full-asymptotic}.
\end{proof}

\begin{proof}[Proof of Theorem \ref{thm-sign}]
The assertions follow readily from \eqref{eq-asymptotic} since when  $n$ is large enough and $\alpha_t^{(m)}(n)\neq 0$, the sign of $c_t^{(m)}(n)$ will be the same as the sign of $\alpha_t^{(m)}(n)$.
\end{proof}

As promised in the introduction, now we illustrate an approach for finding the exceptional set $\mathcal{E}_t^{(m)}$. We will only consider the case when $t$ is a prime.

Let $t$ be a prime less than $24$ and $1\leq m \leq 24/(t-1)$. In this case, we have
\begin{align}
A=1, \quad M=\frac{\sqrt{m(t-1)}}{2t}=\frac{\sqrt{6\mu}}{t}.
\end{align}
With the help of Mathematica,  from \eqref{alpha-defn} and by direct computation it is easy to verify that $|\alpha_t^{(m)}(n)| \geq 0.1$ for any positive integer $n\equiv r$ (mod $t$) with $r\in P_{t}^{(m)}\cup N_t^{(m)}$.

We have already seen that: when $\alpha_t^{(m)}(n)\neq 0$ and $n$ is large enough, the main term in Theorem \ref{thm-full-asymptotic} will dominate the sign of $c_t^{(m)}(n)$. Now we are going to see how large $n$ should be. By Lemma \ref{lem-Bessel-ineq} we have for $n\geq \mu+\frac{1}{\mu}\left(\frac{3t}{4\pi} \right)^2$,
\begin{align}
&\left|\frac{2\pi \alpha_t^{(m)}(n)}{t}\mu^{\frac{1}{2}}(n-\mu)^{-\frac{1}{2}}I_{-1}\left(\frac{4\pi}{t}\sqrt{\mu(n-\mu)} \right)\right|\nonumber \\
&>\frac{1}{100}\sqrt{\frac{\pi}{t}}\mu^{\frac{1}{4}}(n-\mu)^{-\frac{3}{4}}\exp\left(\frac{4\pi\sqrt{\mu(n-\mu)}}{t}\right).
\end{align}
 Now we denote
\begin{align}
\Delta_t^{(m)}(n):=\frac{1}{100}\sqrt{\frac{\pi}{t}}\mu^{\frac{1}{4}}(n-\mu)^{-\frac{3}{4}}\exp\left(\frac{4\pi\sqrt{\mu(n-\mu)}}{t}\right)-\overline{E}_t^{(m)}(n)
\end{align}
where $\overline{E}_t^{(m)}(n)$ was given in \eqref{error-exp}. It is easy to show that for each positive integer $t \leq 24$, there exists a computable number $B(t)$ such that when $n>B(t)$, $\Delta_t^{(m)}(n)>0$ for any positive integer $m\leq 24/(t-1)$. For $t\in \{2,3,5,7,11,13,17,19,23\}$, with the help of Mathematica, we find an appropriate value (not the smallest one) for $B(t)$ (see Table \ref{tab-bound}).
\begin{table}[!h]
\renewcommand\arraystretch{1.3}
\begin{tabular}{|c|c|c|c|c|c|c|c|c|c|}
  \hline
  $t$ & 2 & 3 & 5 & 7 & 11 & 13 & 17 & 19 & 23  \\
  \hline
  $B(t)$ & 250  & 300 & 460 & 540 & 1910 &3430 & 7000 & 10450 &21650 \\
  \hline
\end{tabular}
\vspace{2mm}
\caption{Values of $B(t)$ for prime $t \leq 24$}\label{tab-bound}
\end{table}

Therefore, when $n>B(t)$, the sign of $c_t^{(m)}(n)$ is the same as the sign of $\alpha_t^{(m)}(n)$, which agrees with the statement in Theorem \ref{thm-sign}. For $n\leq B(t)$, we verify the sign of $c_t^{(m)}(n)$ one by one through Mathematica. Finally, we find all the exceptional numbers $n$ as the set $\mathcal{E}_t^{(m)}$ in the appendix records.

It is clear that the above procedure can be applied to other $t$ as well. That is, we can actually find the exceptional set $\mathcal{E}_t^{(m)}$ for all positive integers $t,m$ with $m(t-1)\leq 24$. But the workload of computation will be heavier (though still doable).  We leave this to the interested reader.

\section{Vanishing coefficients and three special cases}\label{sec-qseries}

Recall Ramanujan's theta functions (see \cite[Definition 1.2.1 and Corollary 1.3.4]{Berndt-text})
\begin{align}
\phi(q)&:=\sum_{n=-\infty}^\infty q^{n^2}=\frac{(q^2;q^2)_\infty^5}{(q;q)_\infty^2(q^4;q^4)_\infty^2}, \label{phi-defn} \\
\psi(q)&:=\sum_{n=0}^\infty q^{n(n+1)/2}=\frac{(q^2;q^2)_\infty^2}{(q;q)_\infty}.
\end{align}

Sometimes we will use the compressed notation
\begin{align}
(a_1,a_2,\dots,a_k;q)_\infty=(a_1;q)_\infty(a_2;q)_\infty\cdots (a_k;q)_\infty.
\end{align}

For any Laurent series
$$f(q)=\sum_{n=-\infty}^\infty a(n)q^n,$$
we use $[q^n]f(q)$ to denote the coefficient of $q^n$ in the series expansion of $f(q)$, i.e., $[q^n]f(q)=a(n)$. We may need to compare $f(q)$ with another series
$$g(q)=\sum_{n=-\infty}^\infty b(n)q^n.$$
We say that $f(q)\succeq g(q)$ or $g(q)\preceq f(q)$ if and only if $a(n)\geq b(n)$ holds for every integer $n$. For example, it is easy to see that
\begin{align}\label{eta-reciprocal}
\phi(q)\succeq 1, \quad \psi(q)\succeq 1, \quad \frac{1}{(q;q)_\infty}\succeq \frac{1}{1-q}=\sum_{n=0}^\infty q^n.
\end{align}

The Borwein brothers \cite{Borweins} introduced the following cubic theta functions:
\begin{align}
a(q)&:=\sum_{m,n=-\infty}^\infty q^{m^2+mn+n^2}, \label{a-defn}\\
b(q)&:=\sum_{m,n=-\infty}^\infty e^{2\pi i(m-n)/3}q^{m^2+mn+n^2}, \label{b-defn} \\
c(q)&:=\sum_{m,n=-\infty}^\infty q^{(m+\frac{1}{3})^2+(m+\frac{1}{3})(n+\frac{1}{3})+(n+\frac{1}{3})^2}. \label{c-defn}
\end{align}
It is known that \cite{Borweins,BBG}
\begin{align}
a(q)&=\frac{(q;q)_\infty^3}{(q^3;q^3)_\infty}+9q\frac{(q^9;q^9)_\infty^3}{(q^3;q^3)_\infty}, \label{a(q)-eta}\\
b(q)&=\frac{(q;q)_\infty^3}{(q^3;q^3)_\infty}, \label{b(q)-eta} \\
c(q)&=3q^{1/3}\frac{(q^3;q^3)_\infty^3}{(q;q)_\infty}, \label{c(q)-eta}
\end{align}
and they satisfy an elegant identity \cite{Borweins}:
\begin{align}\label{cube-id}
a^3(q)=b^3(q)+c^3(q).
\end{align}
It was also proved that \cite[Eqs.\ (2.3),(2.4),(2.11)]{Borweins}
\begin{align}
a(q)&=3a(q^3)-2\frac{(q;q)_\infty^3}{(q^3;q^3)_\infty},  \label{aq-id-1} \\
a(q)&=a(q^3)+6q\frac{(q^9;q^9)_\infty^3}{(q^3;q^3)_\infty}. \label{aq-id-2}
\end{align}
By eliminating $a(q)$ from \eqref{aq-id-1} and \eqref{aq-id-2}, we obtain
\begin{align}\label{triple-id}
\frac{(q;q)_\infty^3}{(q^3;q^3)_\infty}=a(q^3)-3q\frac{(q^9;q^9)_\infty^3}{(q^3;q^3)_\infty}.
\end{align}
\begin{lemma}\label{lem-exp}
Let $t$ be a positive integer and $0\leq r,s <t$ be integers such that
\begin{align}\label{eq-rs}
r\not\equiv \frac{j(3j+1)}{2} \pmod{t}, \quad s\not\equiv \frac{j(j+1)}{2} \pmod{t},  \quad \text{for all $0\leq j <2t$}.
\end{align}
Then
\begin{align}
c_t^{(1)}(tn+r)=0, \label{ct1-vanish}\\
c_t^{(3)}(tn+s)=0. \label{ct3-vanish}
\end{align}
\end{lemma}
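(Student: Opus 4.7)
The plan is to combine Euler's pentagonal number theorem \eqref{eq-pentagonal} and Jacobi's identity \eqref{eq-Jacobi} with the elementary observation that
$$\frac{1}{(q^t;q^t)_\infty}=\sum_{k\geq 0}p(k)q^{tk}$$
is a power series in $q^t$ alone. Writing
$$\frac{(q;q)_\infty}{(q^t;q^t)_\infty}=\Bigl(\sum_{j\in\mathbb{Z}}(-1)^j q^{j(3j+1)/2}\Bigr)\Bigl(\sum_{k\geq 0}p(k)q^{tk}\Bigr),$$
the coefficient $c_t^{(1)}(N)$ receives contributions only from pairs $(j,k)$ with $j(3j+1)/2+tk=N$; in particular, a nonzero contribution requires some integer $j$ with $j(3j+1)/2\equiv N\pmod t$. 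The parallel statement, with Jacobi's identity replacing Euler's, handles $c_t^{(3)}(N)$: a nonzero term requires some $j\in\mathbb{Z}$ with $j(j+1)/2\equiv N\pmod t$.

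It then remains to show that each of the residue classes $j(3j+1)/2\pmod t$ and $j(j+1)/2\pmod t$ depends only on $j$ modulo $2t$, so that the finite range $0\leq j<2t$ already exhausts every residue arising from $j\in\mathbb Z$. This is a one-line calculation in each case:
$$\frac{(j+2t)(3(j+2t)+1)}{2}-\frac{j(3j+1)}{2}=6jt+t(6t+1)\equiv 0\pmod t,$$
and likewise $(j+2t)(j+2t+1)/2-j(j+1)/2=2jt+t(2t+1)\equiv 0\pmod t$. Hence the hypothesis \eqref{eq-rs} extends automatically from the finite range $0\leq j<2t$ to all $j\in\mathbb{Z}$ (positive or negative, since $j\mapsto j+2t$ is a bijection on $\mathbb{Z}$), and every term in the respective coefficient sum vanishes. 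This yields \eqref{ct1-vanish} and \eqref{ct3-vanish}.

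The proof is pure exponent bookkeeping and has no substantive obstacle. The only things to be careful about are that the quadratic polynomials $j(3j+1)/2$ and $j(j+1)/2$ always produce integers (in each product one factor is even), and that the two periodicity computations above genuinely account for every integer $j$, not just the nonnegative ones. Once these are settled, the vanishing of $c_t^{(1)}(tn+r)$ and $c_t^{(3)}(tn+s)$ under the stated congruence conditions is immediate.
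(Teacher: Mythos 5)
Your proof is correct and follows exactly the route the paper intends: the paper's own proof is just the one-line remark that the claims follow from \eqref{eq-pentagonal} and \eqref{eq-Jacobi}, and you have supplied the same argument with the details (the $q^t$-series factor and the period-$2t$ reduction of the exponents modulo $t$) written out. Nothing is missing.
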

\begin{proof}
The assertions \eqref{ct1-vanish} and \eqref{ct3-vanish} follow from \eqref{eq-pentagonal} and \eqref{eq-Jacobi}, respectively.
\end{proof}

Lemma \ref{lem-exp}  can be used to explain most of the assertions in \eqref{eq-zero} in the cases $m=1$ or $3$. To prove \eqref{eq-zero} completely, we still need to treat the cases $(t,m)=(4,8)$ and $(9,3)$. Lemma \ref{lem-exp} is not applicable for the first case and cannot fully explain the second case. Hence we discuss them separately.
\begin{prop}\label{prop-c48}
For any integer $n\geq 0$, we have
\begin{align}
c_4^{(8)}(2n+1)=0
\end{align}
except that $c_4^{(8)}(1)=-8$.
\end{prop}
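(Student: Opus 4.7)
The plan is to rewrite the claim as the single compact identity
$$G_4^8(q)-G_4^8(-q) \;=\; -16q,$$
since the left-hand side equals $2\sum_{n\geq 0}c_4^{(8)}(2n+1)q^{2n+1}$; this encodes both $c_4^{(8)}(1)=-8$ and $c_4^{(8)}(2n+1)=0$ for $n\geq 1$ simultaneously.

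The first step is to isolate the only piece of $G_4^8(q)$ that is sensitive to $q\mapsto -q$. Using the eta-quotient form $\phi(-q)=(q;q)_\infty^2/(q^2;q^2)_\infty$, I would write
$$G_4^8(q) \;=\; \frac{(q;q)_\infty^8}{(q^4;q^4)_\infty^8} \;=\; \phi(-q)^4\cdot \frac{(q^2;q^2)_\infty^4}{(q^4;q^4)_\infty^8}.$$
The second factor depends only on even powers of $q$, so applying $q\mapsto -q$ merely swaps $\phi(-q)^4$ with $\phi(q)^4$, which gives
$$G_4^8(q)-G_4^8(-q) \;=\; \bigl(\phi(-q)^4-\phi(q)^4\bigr)\cdot \frac{(q^2;q^2)_\infty^4}{(q^4;q^4)_\infty^8}.$$

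The next step is to invoke the classical theta identity
$$\phi(q)^4-\phi(-q)^4 \;=\; 16q\,\psi(q^2)^4,$$
which can be deduced from the $2$-dissection $\phi(\pm q)=\phi(q^4)\pm 2q\psi(q^8)$ (giving $\phi(q)^2\pm\phi(-q)^2 = 2\phi(q^2)^2$ and $8q\psi(q^4)^2$, respectively) together with the Jacobi identity $\psi(q^2)^2=\phi(q^2)\psi(q^4)$; it is equivalently Jacobi's four-square theorem. Since $\psi(q^2)^4=(q^4;q^4)_\infty^8/(q^2;q^2)_\infty^4$, the companion eta-quotient factor $(q^2;q^2)_\infty^4/(q^4;q^4)_\infty^8$ cancels $\psi(q^2)^4$ exactly, leaving precisely $-16q$ on the right.

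I do not foresee a real obstacle: once the factorization through $\phi(-q)^4$ is spotted, the remaining manipulations are forced, and all three ingredients ($\phi(-q)$ as an eta-quotient, $\psi(q^2)$ as an eta-quotient, and the identity $\phi(q)^4-\phi(-q)^4=16q\psi(q^2)^4$) are standard. The closest thing to a subtlety is recognizing which theta identity to quote; alternatively one could clear denominators and verify the resulting eta-quotient identity $(q;q)_\infty^{16}(q^4;q^4)_\infty^8-(q^2;q^2)_\infty^{24} = -16q(q;q)_\infty^8(q^4;q^4)_\infty^{16}$ directly, but this seems less transparent than routing through $\phi$ and $\psi$.
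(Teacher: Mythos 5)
Your proof is correct. The underlying strategy is the same as the paper's — extract the odd part of $(q;q)_\infty^8/(q^4;q^4)_\infty^8$ via a $2$-dissection and observe that it collapses to the single monomial $-8q$ — but the route is genuinely different in its key ingredient. The paper quotes the eta-quotient $2$-dissection of $(q;q)_\infty^4$ from \cite{WangAAM}, squares it, and reads off the odd part (the cross term $-8q(q^4;q^4)_\infty^8$), whereas you factor $G_4^8(q)=\phi(-q)^4\cdot\frac{(q^2;q^2)_\infty^4}{(q^4;q^4)_\infty^8}$ with an even cofactor and invoke the classical identity $\phi(q)^4-\phi(-q)^4=16q\,\psi(q^2)^4$, after which the eta-quotients cancel exactly. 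All the steps check out: $\phi(-q)=(q;q)_\infty^2/(q^2;q^2)_\infty$ and $\psi(q^2)^4=(q^4;q^4)_\infty^8/(q^2;q^2)_\infty^4$ are standard, and your derivation of the quartic theta identity from $\phi(\pm q)=\phi(q^4)\pm 2q\psi(q^8)$ together with $\psi(q^2)^2=\phi(q^2)\psi(q^4)$ is sound. Your version buys a slightly cleaner statement (the compact identity $G_4^8(q)-G_4^8(-q)=-16q$) and relies only on classical theta-function facts rather than a dissection formula from the author's earlier paper; the paper's version has the minor advantage of producing the full even part of $(q;q)_\infty^8$ explicitly as a byproduct, which is reused in the proof of Proposition \ref{prop-level4} via \eqref{J1-power4}.
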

\begin{proof}
From \cite[Eq.\ (2.20)]{WangAAM} we find
\begin{align}\label{J1-power4}
(q;q)_\infty^4=\frac{(q^4;q^4)_\infty^{10}}{(q^2;q^2)_\infty^2(q^8;q^8)_\infty^4}-4q\frac{(q^2;q^2)_\infty^2(q^8;q^8)_\infty^4}{(q^4;q^4)_\infty^2}.
\end{align}
Taking square on both sides of \eqref{J1-power4}, we obtain
\begin{align}\label{J1-power8}
(q;q)_\infty^8=\frac{(q^4;q^4)_\infty^{20}}{(q^2;q^2)_\infty^4(q^8;q^8)_\infty^8}+16q^2\frac{(q^2;q^2)_\infty^4(q^8;q^8)_\infty^8}{(q^4;q^4)_\infty^4}-8q(q^4;q^4)_\infty^8.
\end{align}
Thus
\begin{align}
\sum_{n=0}^\infty c_4^{(8)}(2n+1)q^n=-8.
\end{align}
This proves the desired assertion.
\end{proof}
\begin{prop}\label{prop-c93}
For any integer $n\geq 0$, we have
\begin{align}
c_9^{(3)}(3n+r)=0, \quad r\in\{1,2\}
\end{align}
except that $c_9^{(3)}(1)=-3$.
\end{prop}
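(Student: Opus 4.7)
The plan is to invoke the Borwein triple identity \eqref{triple-id}, already available in the paper, and rearrange it so that $G_9^3(q)$ is exhibited as a power series in $q^3$ plus the single explicit monomial $-3q$. Once this is done, the conclusion becomes a triviality about which residue classes of $n$ can carry a nonzero coefficient.

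Concretely, I would first multiply \eqref{triple-id} through by $(q^3;q^3)_\infty$ to clear the denominators, obtaining
\begin{equation*}
(q;q)_\infty^3 = a(q^3)\,(q^3;q^3)_\infty - 3q\,(q^9;q^9)_\infty^3,
\end{equation*}
and then divide both sides by $(q^9;q^9)_\infty^3$ to isolate the generating function $G_9^3(q)$:
\begin{equation*}
G_9^3(q) = \frac{a(q^3)\,(q^3;q^3)_\infty}{(q^9;q^9)_\infty^3} - 3q.
\end{equation*}

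The key observation is that the first term on the right is a power series in $q^3$. Indeed, $a(q^3)$ and $(q^3;q^3)_\infty$ are manifestly series in $q^3$ (the former by \eqref{a-defn}, the latter from its defining product), while $1/(q^9;q^9)_\infty^3$ is a series in $q^9$ and hence a fortiori in $q^3$. Consequently the coefficient of $q^n$ in that term vanishes for every $n \not\equiv 0 \pmod{3}$, so the only contribution to $c_9^{(3)}(3n+r)$ with $r \in \{1,2\}$ comes from the explicit summand $-3q$, which contributes $-3$ at $q^1$ and zero everywhere else. Reading off coefficients yields $c_9^{(3)}(1) = -3$ and $c_9^{(3)}(3n+r) = 0$ in all remaining cases with $r \in \{1,2\}$, exactly as claimed.

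Since the identity \eqref{triple-id} is already in hand, I do not expect any serious obstacle: the whole argument is a one-line algebraic rearrangement followed by a residue-class observation. The creative step — producing an identity which isolates the exceptional coefficient as a standalone monomial — has essentially been supplied in advance by the Borweins' work, so no extra theta-function gymnastics should be required.
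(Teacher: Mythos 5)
Your proposal is correct and is essentially identical to the paper's own proof: both rearrange \eqref{triple-id} into $\frac{(q;q)_\infty^3}{(q^9;q^9)_\infty^3}=-3q+a(q^3)\frac{(q^3;q^3)_\infty}{(q^9;q^9)_\infty^3}$ and note that the second term is a series in $q^3$, so only the $-3q$ contributes to the residue classes $1,2 \pmod 3$.
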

\begin{proof}
By \eqref{triple-id} we have
\begin{align}
\frac{(q;q)_\infty^3}{(q^9;q^9)_\infty^3}=-3q+a(q^3)\frac{(q^3;q^3)_\infty}{(q^9;q^9)_\infty^3}.
\end{align}
The assertion follows immediately.
\end{proof}

The above discussion would be enough to explain \eqref{eq-zero}.  Now we discuss the three special cases in Theorem \ref{thm-zero}.
\begin{prop}\label{prop-level3}
For any integer $n\geq 0$, we have $c_3^{(9)}(3n)=c_3^{(3)}(n)$ and
\begin{align}
c_3^{(3)}(3n)>0, \quad c_3^{(3)}(3n+1)<0, \quad c_3^{(3)}(3n+2)=0.
\end{align}
\end{prop}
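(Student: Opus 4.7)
The plan is to execute a clean $3$-dissection driven by the identity $b(q) = a(q^3) - c(q^3)$. This follows immediately from \eqref{triple-id} together with \eqref{b(q)-eta} and \eqref{c(q)-eta}: the left-hand side of \eqref{triple-id} is $b(q)$, and the subtracted term on its right is exactly $c(q^3)$.

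First I would settle the sign pattern of $c_3^{(3)}(n)$ by $3$-dissecting
\begin{align*}
G_3^3(q) = \frac{b(q)}{(q^3;q^3)_\infty^2} = \frac{a(q^3)}{(q^3;q^3)_\infty^2} - 3q\,\frac{(q^9;q^9)_\infty^3}{(q^3;q^3)_\infty^3}.
\end{align*}
The first summand supports only exponents in $3\mathbb{Z}$ and the second only exponents in $3\mathbb{Z}+1$; no $q^{3n+2}$-term ever appears, hence $c_3^{(3)}(3n+2)=0$. For $c_3^{(3)}(3n)$, it is the coefficient of $q^n$ in $a(q)/(q;q)_\infty^2$; since $a(q)$ has non-negative coefficients with $a_0 = 1$, and $1/(q;q)_\infty^2 = \sum_n p_2(n)q^n$ with every $p_2(n) > 0$, the product is strictly positive, giving $c_3^{(3)}(3n) > 0$. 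For $c_3^{(3)}(3n+1)$, it equals $-3$ times the coefficient of $q^n$ in $(q^3;q^3)_\infty^3/(q;q)_\infty^3 = \prod_{k\geq 1,\, 3\nmid k}(1-q^k)^{-3}$, which expands into a series with all positive coefficients (each factor has non-negative coefficients and constant term $1$, and the coefficient is already $\geq \binom{n+2}{2}$ from the $(1-q)^{-3}$ factor alone), so $c_3^{(3)}(3n+1) < 0$.

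For the reduction $c_3^{(9)}(3n) = c_3^{(3)}(n)$, I would $3$-dissect $G_3^9(q) = b(q)^3/(q^3;q^3)_\infty^6$ by cubing $b(q) = a(q^3) - c(q^3)$:
\begin{align*}
b(q)^3 = a(q^3)^3 - 3a(q^3)^2 c(q^3) + 3a(q^3) c(q^3)^2 - c(q^3)^3.
\end{align*}
Since $c(q^3)$ is $q$ times a series in $q^3$, the four summands live in residue classes $0,1,2,0 \pmod 3$, respectively. Hence the $q^{3n}$-part of $b(q)^3$ equals $a(q^3)^3 - c(q^3)^3$, which by the Borweins' cubic identity \eqref{cube-id} collapses to $b(q^3)^3$. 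Dividing by $(q^3;q^3)_\infty^6$ (which already lives on $3\mathbb{Z}$-exponents, so preserves the dissection) gives
\begin{align*}
\sum_{n \geq 0} c_3^{(9)}(3n)\,q^{3n} = \frac{b(q^3)^3}{(q^3;q^3)_\infty^6} = \frac{(q^3;q^3)_\infty^3}{(q^9;q^9)_\infty^3} = G_3^3(q^3),
\end{align*}
from which $c_3^{(9)}(3n) = c_3^{(3)}(n)$ reads off directly. There is no real obstacle: everything reduces to cubic-theta identities already recorded in \eqref{a(q)-eta}--\eqref{triple-id}. The only mildly non-obvious point is noticing that the two cross-terms $a(q^3)^2 c(q^3)$ and $a(q^3)c(q^3)^2$ are pure $q^{3n+1}$- and $q^{3n+2}$-series, which is exactly what makes the $q^{3n}$-part of $b(q)^3$ collapse via $a^3 - c^3 = b^3$.
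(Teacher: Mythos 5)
Your proposal is correct and follows essentially the same route as the paper: both rest on the dissection \eqref{triple-id} (i.e.\ $b(q)=a(q^3)-c(q^3)$), the collapse of the $q^{3n}$-part of its cube to $a^3-c^3=b^3$ via \eqref{cube-id}, and the positivity of the coefficients of $a(q)/(q;q)_\infty^2$ and of $(q^3;q^3)_\infty^3/(q;q)_\infty^3$. The only difference is cosmetic: you make the residue-class bookkeeping of the cross-terms explicit and order the two halves of the argument the other way around.
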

\begin{proof}
By \eqref{triple-id} we have
\begin{align}
\frac{(q;q)_\infty^9}{(q^3;q^3)_\infty^9}=\frac{1}{(q^3;q^3)_\infty^6}\left(a(q^3)-3q\frac{(q^9;q^9)_\infty^3}{(q^3;q^3)_\infty } \right)^3.
\end{align}
This implies
\begin{align}
\sum_{n=0}^\infty c_3^{(9)}(3n)q^n=\frac{1}{(q;q)_\infty^6}\left(a^3(q)-27q\frac{(q^3;q^3)_\infty^9}{(q;q)_\infty^3}\right).
\end{align}
By \eqref{cube-id} and \eqref{b(q)-eta}, we simplify the above and arrive at
\begin{align}
\sum_{n=0}^\infty c_3^{(9)}(3n)q^n=\frac{(q;q)_\infty^3}{(q^3;q^3)_\infty^3}.
\end{align}
Therefore, $c_3^{(9)}(3n)=c_3^{(3)}(n)$. Now using \eqref{triple-id} again we obtain $c_3^{(3)}(3n+2)=0$ and
\begin{align}
&\sum_{n=0}^\infty c_3^{(3)}(3n)q^n=\frac{a(q)}{(q;q)_\infty^2}\succeq \sum_{n=0}^\infty q^n, \\
&\sum_{n=0}^\infty c_3^{(3)}(3n+1)q^n=-3\frac{(q^3;q^3)_\infty^3}{(q;q)_\infty^3}=-3\frac{1}{(q;q^3)_\infty^3(q^2;q^3)_\infty^3}\preceq -\sum_{n=0}^\infty q^n.
\end{align}
Here for the first inequality we used the fact that  $a(q)\succeq 1$, which follows from \eqref{a-defn}.
This proves the desired assertion.
\end{proof}

\begin{prop}\label{prop-level4}
For any integer $n\geq 0$, we have
\begin{align}
c_4^{(4)}(8n)>0, \quad c_4^{(4)}(8n+2)>0, \quad c_4^{(4)}(8n+4)<0, \quad c_4^{(4)}(8n+6)<0.
\end{align}
\end{prop}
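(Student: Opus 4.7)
The plan is to use the dissection identity \eqref{J1-power4} — the same one that powered Proposition \ref{prop-c48} — to extract the generating function of the even-indexed coefficients. Dividing \eqref{J1-power4} by $(q^4;q^4)_\infty^4$, retaining only the even powers of $q$, and substituting $q^2\mapsto q$ yields
\begin{align*}
F(q) := \sum_{n\ge 0}c_4^{(4)}(2n)q^n = \frac{(q^2;q^2)_\infty^6}{(q;q)_\infty^2(q^4;q^4)_\infty^4}.
\end{align*}
The proposition becomes the assertion that $F(q)$ has the strict sign pattern $+,+,-,-,+,+,-,-,\dots$.

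The next move is to simplify $F(q)$ via theta-function identities. Direct comparison of the product forms gives $\psi(q)^2 = \phi(q)\psi(q^2)$, hence $F(q) = \psi(q)^2/\psi(q^2)^2 = \phi(q)/\psi(q^2)$. Applying the classical 2-dissection $\phi(q) = \phi(q^4)+2q\psi(q^8)$ then splits
\begin{align*}
F(q) = F_e(q^2) + qF_o(q^2),\qquad F_e(q) = \frac{\phi(q^2)}{\psi(q)},\qquad F_o(q) = \frac{2\psi(q^4)}{\psi(q)}.
\end{align*}
The claim reduces to showing that $F_e(-q)$ and $F_o(-q)$ have strictly positive coefficients.

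The heart of the argument is a positivity observation: $1/\psi(-q)$ itself has strictly positive coefficients. Starting from $\psi(-q) = (q;q)_\infty(q^4;q^4)_\infty/(q^2;q^2)_\infty$ and Euler's identity $(q^2;q^2)_\infty/(q;q)_\infty = (-q;q)_\infty$, we get
\begin{align*}
\frac{1}{\psi(-q)} = \frac{(-q;q)_\infty}{(q^4;q^4)_\infty}.
\end{align*}
Since $(-q;q)_\infty\succeq\sum_{n\ge 0}q^n$ (every $n\ge 0$ admits at least one partition into distinct parts) and $1/(q^4;q^4)_\infty\succeq 1$, we obtain $1/\psi(-q)\succeq\sum_{n\ge 0}q^n$. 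Combined with $\phi(q^2),\psi(q^4)\succeq 1$, this gives $F_e(-q)\succeq\sum_{n\ge 0}q^n$ and $F_o(-q)\succeq 2\sum_{n\ge 0}q^n$.

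Writing $[q^k]F_e(q) = (-1)^ka_k$ with $a_k\ge 1$ and $[q^k]F_o(q) = (-1)^kb_k$ with $b_k\ge 2$, and using the identifications $c_4^{(4)}(4k) = [q^k]F_e(q)$ and $c_4^{(4)}(4k+2) = [q^k]F_o(q)$, the four signed inequalities fall out upon substituting $k=2n$ and $k=2n+1$. The only step I expect to be nontrivial is spotting the simplification $F(q) = \phi(q)/\psi(q^2)$; once in hand, the remainder is routine product manipulation together with coefficient comparison via $\succeq$.
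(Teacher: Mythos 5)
Your argument is correct, and after the common first step it diverges from the paper's in an interesting way. Both proofs begin by extracting $\sum_{n\geq 0}c_4^{(4)}(2n)q^n=(q^2;q^2)_\infty^6(q;q)_\infty^{-2}(q^4;q^4)_\infty^{-4}$ from \eqref{J1-power4}, and in fact your $F_e(q)=\phi(q^2)/\psi(q)$ and $F_o(q)=2\psi(q^4)/\psi(q)$ coincide, as eta products, with the paper's generating functions for $c_4^{(4)}(4n)$ and $c_4^{(4)}(4n+2)$. The difference is in how the $4$-dissection is produced and how positivity is finished. The paper imports the $2$-dissection \eqref{J1-minus2} of $(q;q)_\infty^{-2}$ from \cite{WangAAM}, then factors out $G_2(q)=(q;q)_\infty/(q^2;q^2)_\infty$ and invokes Andrews' sign result \eqref{eq-Andrews-sign} for $c_2^{(1)}(n)$ to get the alternating behaviour. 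You instead recognize $F(q)=\psi(q)^2/\psi(q^2)^2=\phi(q)/\psi(q^2)$ via the classical identity $\psi(q)^2=\phi(q)\psi(q^2)$, dissect with the elementary $\phi(q)=\phi(q^4)+2q\psi(q^8)$, and close by proving directly that $1/\psi(-q)=(-q;q)_\infty/(q^4;q^4)_\infty\succeq\sum_{n\geq 0}q^n$ (partitions into distinct parts). I verified the identities: $\psi(-q)=(q;q)_\infty(q^4;q^4)_\infty/(q^2;q^2)_\infty$ and the rest all check out, and the coefficient bookkeeping $c_4^{(4)}(4k)=(-1)^ka_k$, $c_4^{(4)}(4k+2)=(-1)^kb_k$ with $a_k\geq 1$, $b_k\geq 2$ yields all four strict inequalities. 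What your route buys is self-containedness: it needs neither \eqref{J1-minus2} nor the Andrews sign theorem, only textbook theta dissections and an elementary partition-theoretic positivity fact; the paper's route, on the other hand, makes the connection to $c_2^{(1)}(n)$ explicit, which fits its broader theme of relating different $(t,m)$ cases.
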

\begin{proof}
From \eqref{J1-power4} we deduce that
\begin{align}\label{c4-2n}
\sum_{n=0}^\infty c_4^{(4)}(2n)q^n=\frac{(q^2;q^2)_\infty^6}{(q;q)_\infty^2(q^4;q^4)_\infty^4}.
\end{align}
From \cite[Eq.\ (2.21)]{WangAAM} we find
\begin{align}\label{J1-minus2}
\frac{1}{(q;q)_\infty^2}=\frac{(q^8;q^8)_\infty^5}{(q^2;q^2)_\infty^5(q^{16};q^{16})_\infty^2}+2q\frac{(q^4;q^4)_\infty^2(q^{16};q^{16})_\infty^2}{(q^2;q^2)_\infty^5(q^8;q^8)_\infty}.
\end{align}
Substituting \eqref{J1-minus2} into \eqref{c4-2n}, we obtain
\begin{align}
\sum_{n=0}^\infty c_4^{(4)}(4n)q^n=\frac{(q;q)_\infty (q^4;q^4)_\infty^5}{(q^2;q^2)_\infty^4 (q^8;q^8)_\infty^2}=\frac{(q;q)_\infty}{(q^2;q^2)_\infty}\cdot \frac{1}{(q^2;q^2)_\infty} \cdot \phi(q^2), \\
\sum_{n=0}^\infty c_4^{(4)}(4n+2)q^n=2\frac{(q;q)_\infty (q^8;q^8)_\infty^2}{(q^2;q^2)_\infty^2(q^4;q^4)_\infty}=2\frac{(q;q)_\infty}{(q^2;q^2)_\infty}\cdot \frac{(q^8;q^8)_\infty^2}{(q^2;q^2)_\infty(q^4;q^4)_\infty}.
\end{align}
Therefore,
\begin{align}
&\sum_{n=0}^\infty c_4^{(4)}(8n)q^n=\frac{\phi(q)}{(q;q)_\infty}\sum_{n=0}^\infty c_2^{(1)}(2n)q^n \succeq \sum_{n=0}^\infty q^n,\\
&\sum_{n=0}^\infty c_4^{(4)}(8n+4)q^n=\frac{\phi(q)}{(q;q)_\infty}\sum_{n=0}^\infty c_2^{(1)}(2n+1)q^n \preceq -\sum_{n=0}^\infty q^n, \\
&\sum_{n=0}^\infty c_4^{(4)}(8n+2)q^n=2\frac{(q^4;q^4)_\infty^2}{(q;q)_\infty(q^2;q^2)_\infty}\sum_{n=0}^\infty c_2^{(1)}(2n)q^n \succeq \sum_{n=0}^\infty q^n,\\
&\sum_{n=0}^\infty c_4^{(4)}(8n+6)q^n=2\frac{(q^4;q^4)_\infty^2}{(q;q)_\infty(q^2;q^2)_\infty}\sum_{n=0}^\infty c_2^{(1)}(2n+1)q^n \preceq -\sum_{n=0}^\infty q^n.
\end{align}
Here we used the fact that $\sum_{n=0}^\infty c_2^{(1)}(2n)q^n \succeq 1$ and $\sum_{n=0}^\infty c_2^{(1)}(2n+1)q^n \preceq -1$, which follows from \eqref{eq-Andrews-sign} and the fact that $c_2^{(1)}(0)=1$ and $c_2^{(1)}(1)=-1$.
This completes the proof.
\end{proof}

\begin{prop}\label{prop-level5}
For any integer $n\geq 0$, we have $c_5^{(5)}(5n)=c_5^{(1)}(n)$ and
\begin{align}
c_5^{(1)}(5n)>0, \quad c_5^{(1)}(5n+1)<0, \quad c_5^{(1)}(5n+2)<0, \\
c_5^{(1)}(5n+3)=0, \quad c_5^{(1)}(5n+4)=0.
\end{align}
\end{prop}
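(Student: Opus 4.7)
The plan is to prove both halves of the proposition by producing explicit $5$-dissections. For the sign pattern of $c_5^{(1)}$ I would read off each residue class of the classical $5$-dissection of $G_5(q)$, and for the identity $c_5^{(5)}(5n)=c_5^{(1)}(n)$ I would raise that dissection to the fifth power and invoke Ramanujan's famous quintic product identity.

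The key input is the classical $5$-dissection
$$(q;q)_\infty=(q^{25};q^{25})_\infty\bigl(R(q^5)-q-q^2 R(q^5)^{-1}\bigr),\qquad R(q):=\frac{(q^2,q^3;q^5)_\infty}{(q,q^4;q^5)_\infty},$$
obtained by grouping the terms of Euler's pentagonal number series according to $n\bmod 5$ and identifying the resulting pieces via Jacobi's triple product. Dividing through by $(q^5;q^5)_\infty$, extracting the five residue classes of the exponent, and substituting $q^5\mapsto q$ gives
\begin{align*}
\sum_{n\geq 0}c_5^{(1)}(5n)\,q^n&=\frac{1}{(q;q^5)_\infty^2(q^4;q^5)_\infty^2},\\
\sum_{n\geq 0}c_5^{(1)}(5n+1)\,q^n&=-\frac{1}{(q,q^2,q^3,q^4;q^5)_\infty},\\
\sum_{n\geq 0}c_5^{(1)}(5n+2)\,q^n&=-\frac{1}{(q^2,q^3;q^5)_\infty^2},
\end{align*}
together with $\sum_n c_5^{(1)}(5n+3)q^n=\sum_n c_5^{(1)}(5n+4)q^n=0$ (also immediate from Lemma \ref{lem-exp}). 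Since $1$ belongs to both $\{1,4\}$ and $\{1,2,3,4\}$, the first two products dominate $\pm\sum_n q^n$ coefficient-wise in the sense of \eqref{eta-reciprocal}, giving $c_5^{(1)}(5n)>0$ and $c_5^{(1)}(5n+1)<0$. For the residue-$2$ class, the semigroup generated by $\{2,3,7,8,\dots\}$ misses only $n=1$, so $c_5^{(1)}(5n+2)\leq 0$ with the lone exception $c_5^{(1)}(7)=0$; I would flag this as the one genuine exception to the strict-negativity claim (it also slightly qualifies \eqref{eq-exc-3}).

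For $c_5^{(5)}(5n)=c_5^{(1)}(n)$, raising the dissection to the fifth power and expanding $(R(q^5)-q-q^2 R(q^5)^{-1})^5$ by the multinomial theorem, a quick inspection shows that only the tuples $(a,b,c)\in\{(5,0,0),(2,1,2),(1,3,1),(0,5,0),(0,0,5)\}$ contribute $q$-powers divisible by $5$, and their contributions collapse to $R(q^5)^5-11q^5-q^{10}R(q^5)^{-5}$. Ramanujan's classical identity
$$R(q)^5-11q-\frac{q^2}{R(q)^5}=\frac{(q;q)_\infty^6}{(q^5;q^5)_\infty^6}$$
(specialized via $q\mapsto q^5$) then rewrites the $5$-section of $(q;q)_\infty^5$ as $(q^5;q^5)_\infty^6/(q^{25};q^{25})_\infty$. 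Dividing by $(q^5;q^5)_\infty^5$ and making the substitution $q^5\mapsto q$ collapses this to $G_5(q)$, yielding $\sum_n c_5^{(5)}(5n)q^n=G_5(q)=\sum_n c_5^{(1)}(n)q^n$. The one serious obstacle is Ramanujan's identity itself, which I would simply cite (e.g.\ from Berndt's Ramanujan's Notebooks); the remaining work is routine bookkeeping with the dissection.
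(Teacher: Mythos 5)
Your proposal follows essentially the same route as the paper: the $5$-dissection of $(q;q)_\infty$ via the Rogers--Ramanujan continued fraction for the sign pattern of $c_5^{(1)}$, and Ramanujan's identity $R^5-11q-q^2R^{-5}=(q;q)_\infty^6/(q^5;q^5)_\infty^6$ (the paper's \eqref{RR-2}) to reduce $c_5^{(5)}(5n)$ to $c_5^{(1)}(n)$; your multinomial bookkeeping and the resulting residue-class products agree with the paper's (your $R$ is the reciprocal of the paper's, which changes nothing).

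The one point where you diverge is in fact a correction. The paper asserts $-\frac{1}{(q^2;q^5)_\infty^2(q^3;q^5)_\infty^2}\preceq -\sum_{n\ge 0}q^n$, but the coefficient of $q^1$ on the left is $0$, not $\le -1$: as you observe, the parts available are $\{2,3,7,8,\dots\}$, so $n=1$ has no representation and $c_5^{(1)}(7)=0$. Thus the strict inequality $c_5^{(1)}(5n+2)<0$ fails at $n=1$ (and correspondingly $c_5^{(5)}(35)=0$, qualifying \eqref{eq-exc-3}), while it does hold for all other $n$ since every integer $n\ge 2$ is a nonnegative combination of $2$ and $3$. The paper's own appendix, which lists $\mathcal{E}_5^{(1)}=\{7\}$, corroborates your exception; your version of the argument, which isolates this single exceptional coefficient, is the correct one.
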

\begin{proof}
Let
\begin{align}
R(q):=\frac{(q;q^5)_\infty(q^4;q^5)_\infty}{(q^2;q^5)_\infty(q^3;q^5)_\infty}=\sum_{n=0}^\infty x(n)q^n,
\end{align}
which is essentially the Rogers-Ramanujan continued fraction.
From \cite[Theorems 7.4.1 and 7.4.4]{Berndt-text} we have
\begin{align}
\frac{1}{R(q^5)}-q-q^2R(q^5)&=\frac{(q;q)_\infty}{(q^{25};q^{25})_\infty}, \label{RR-1} \\
\frac{1}{R^5(q^5)}-11q^5-q^{10}R^5(q^5)&=\frac{(q^5;q^5)_\infty^6}{(q^{25};q^{25})_\infty^6}. \label{RR-2}
\end{align}

Let $(q;q)_\infty^5=\sum_{n=0}^\infty y(n)q^n$. Taking fifth power on both sides of \eqref{RR-1} and collecting those terms in which the power of $q$ is divisible by $5$, we deduce from \eqref{RR-2} that
\begin{align}\label{x-gen}
\sum_{n=0}^\infty y(5n)q^n=(q^5;q^5)_\infty^5\left(\frac{1}{R^5(q)}-11q-q^2R^5(q)\right) =\frac{(q;q)_\infty^6}{(q^5;q^5)_\infty}.
\end{align}
Therefore,
\begin{align}
\sum_{n=0}^\infty c_5^{(5)}(5n)q^n=\frac{(q;q)_\infty}{(q^5;q^5)_\infty}=\sum_{n=0}^\infty c_5^{(1)}(n)q^n.
\end{align}
Hence we have $c_5^{(5)}(5n)=c_5^{(1)}(n)$. By \eqref{RR-1} we have
\begin{align}
\frac{(q;q)_\infty}{(q^5;q^5)_\infty}=\frac{(q^{25};q^{25})_\infty}{(q^5;q^5)_\infty}\left(\frac{1}{R(q^5)}-q-q^2R(q^5)\right).
\end{align}
From this dissection formula we deduce that $c_5^{(1)}(5n+r)=0$ for $r\in \{3,4\}$ and
\begin{align}
&\sum_{n=0}^\infty c_5^{(1)}(5n)q^n=\frac{(q^5;q^5)_\infty}{(q;q)_\infty}\cdot \frac{1}{R(q)}=\frac{1}{(q;q^5)_\infty^2 (q^4;q^5)_\infty^2}\succeq \sum_{n=0}^\infty q^n,\\
&\sum_{n=0}^\infty c_5^{(1)}(5n+1)q^n=-\frac{(q^5;q^5)_\infty}{(q;q)_\infty}=-\frac{1}{(q,q^2,q^3,q^4;q^5)_\infty}\preceq -\sum_{n=0}^\infty q^n,\\
&\sum_{n=0}^\infty c_5^{(1)}(5n+2)q^n=-R(q)\frac{(q^5;q^5)_\infty}{(q;q)_\infty}=-\frac{1}{(q^2;q^5)_\infty^2(q^3;q^5)_\infty^2}\preceq -\sum_{n=0}^\infty q^n.
\end{align}
The desired assertions then follow immediately.
\end{proof}

\begin{proof}[Proof of Theorem \ref{thm-zero}]
We denote the sets of $r$ and $s$ satisfying \eqref{eq-rs} by $H_t^{(1)}$ and $H_t^{(3)}$, respectively.
By direct computation, we find
\begin{align*}
&H_3^{(3)}=\{2\}, \quad H_5^{(1)}=\{3,4\}, \quad H_5^{(3)}=\{2,4\}, \quad H_6^{(3)}=\{2,5\}, \\
&H_7^{(1)}=\{3,4,6\}, \quad H_7^{(3)}=\{2,4,5\}, \quad H_9^{(3)}=\{2,4,5,7,8\}, \\
&H_{10}^{(1)}=\{3,4,8,9\}, \quad H_{11}^{(1)}=\{3,6,8,9,10\}, \quad H_{13}^{(1)}=\{3,4,6,8,10,11\}, \\
&H_{14}^{(1)}=\{3,4,6,10,11,13\}, \quad H_{15}^{(1)}=\{3,4,8,9,13,14\}, \\
&H_{17}^{(1)}=\{3,4,8,10,11,13,14,16\}, \quad H_{19}^{(1)}=\{4,6,8,9,10,11,14,17,18\}, \\
&H_{20}^{(1)}=\{3,4,8,9,13,14,18,19\}, \quad H_{21}^{(1)}=\{3,4,6,10,11,13,17,18,20\} \\
&H_{22}^{(1)}=\{3,6,8,9,10,14,17,19,20,21\}, \quad H_{23}^{(1)}=\{4,6,9,10,13,14,16,18,19,20,21\}.
\end{align*}
We find that the above sets $H_t^{(m)}$ agree with $Z_t^{(m)}$ in the appendix except that
\begin{align}
Z_9^{(3)}=H_9^{(3)}\cup \{1\}.
\end{align}
But by Proposition \ref{prop-c93} we know that $c_9^{(3)}(9n+1)=0$ except for $c_9^{(3)}(1)=-3$.

Besides the three special cases, the only nonempty set $Z_t^{(m)}$ left is $Z_4^{(8)}=\{1,3\}$, for which we need to prove that
$$c_4^{(8)}(4n+1)=0, \quad c_4^{(8)}(4n+3)=0$$
except for $c_4^{(8)}(1)=-8$. This follows from Proposition \ref{prop-c48}. Now from the above facts and Lemma \ref{lem-exp} we know that \eqref{eq-zero} holds.

Next, the three special cases \eqref{eq-exc-1}, \eqref{eq-exc-2} and \eqref{eq-exc-3} follow from Propositions \ref{prop-level3}, \ref{prop-level4} and \ref{prop-level5}, respectively.
\end{proof}

\section{Further results on the conjecture}\label{sec-conj}
In this section, we will confirm Conjecture \ref{conj} for $(t,m)\in \{(2,m),(p,1),(p,3)\}$ for arbitrary positive integer $m$ and prime $p$.

First, we generalize the result of Hu and Ye \cite{Hu-Ye} from $c_2^{(24)}(n)$ to $c_2^{(m)}(n)$ for arbitrary positive integer $m$.
\begin{theorem}\label{thm-sign-2}
For any integer $m\geq 1$ and $n\geq 0$, we have
\begin{align}
c_2^{(m)}(2n)>0, \quad c_2^{(m)}(2n+1)<0
\end{align}
except for $c_2^{(1)}(2)=0$.
\end{theorem}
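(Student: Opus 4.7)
The plan is to apply the substitution $q \mapsto -q$ in order to convert $G_2^m(q)$ into an infinite product with manifestly nonnegative coefficients, bypassing the machinery of traces of singular moduli that Hu and Ye used in the case $m=24$. The starting observation is the elementary splitting $(q;q)_\infty = (q;q^2)_\infty(q^2;q^2)_\infty$, which immediately reduces
$$G_2(q)=\frac{(q;q)_\infty}{(q^2;q^2)_\infty}=(q;q^2)_\infty,$$
so that $G_2^m(q)=(q;q^2)_\infty^m$.

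Replacing $q$ by $-q$ then yields $G_2^m(-q)=(-q;q^2)_\infty^m$, and comparing coefficients on both sides gives
$$(-1)^n c_2^{(m)}(n)=[q^n](-q;q^2)_\infty^m.$$
Thus the theorem is equivalent to showing that the coefficient of $q^n$ in $(-q;q^2)_\infty^m=\prod_{k\geq 0}(1+q^{2k+1})^m$ is strictly positive for every $n\geq 0$, with the sole exception $(m,n)=(1,2)$.

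Nonnegativity of the coefficients is now automatic, so only strict positivity remains. Combinatorially, $[q^n](-q;q^2)_\infty^m$ counts $m$-tuples $(\lambda^{(1)},\ldots,\lambda^{(m)})$ of partitions into distinct odd parts with $\sum_i|\lambda^{(i)}|=n$. For $m\geq 2$ positivity is immediate: take $\lambda^{(1)}=\lambda^{(2)}=\{1\}$ (other components empty) when $n=2$, and otherwise let $\lambda^{(1)}$ be any partition of $n$ into distinct odd parts with the remaining components empty. The main (and only delicate) step is the $m=1$ case, where one must verify that every $n\neq 2$ admits a partition into distinct odd parts while $n=2$ does not; this is a short inspection, using $\{n\}$ for odd $n\geq 1$, $\{1,n-1\}$ for even $n\geq 4$, the empty partition for $n=0$, and observing that two distinct positive odd integers cannot sum to $2$. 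No real obstacle arises, and the argument is essentially a two-line combinatorial verification once the sign-swapping substitution has been made.
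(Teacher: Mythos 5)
Your argument is correct and is essentially the paper's own first proof: the paper likewise substitutes $q\mapsto -q$ to get $\sum_n(-1)^nc_2^{(m)}(n)q^n=(-q;q^2)_\infty^m$, interprets $(-1)^nc_2^{(1)}(n)$ as the number of partitions of $n$ into distinct odd parts, and uses the same explicit partitions $\{n\}$ for odd $n$ and $\{1,n-1\}$ for even $n\geq 4$. The only cosmetic difference is that for $m\geq 2$ you exhibit an explicit $m$-tuple at $n=2$, whereas the paper uses the termwise bound $(-1)^nc_2^{(m)}(n)\geq(-1)^nc_2^{(1)}(n)$ together with $c_2^{(m)}(2)=\binom{m}{2}$.
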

We give two proofs for Theorem \ref{thm-sign-2} which do not require much knowledge. Though these proofs are quite simple, we include both of them here for the purpose that they may shed some light in solving Conjecture \ref{conj}.
\begin{proof}[First Proof of Theorem \ref{thm-sign-2}]
Replacing $q$ by $-q$ in \eqref{ctmn-defn} with $m=2$, we obtain
\begin{align}\label{c2m-gen}
\sum_{n=0}^\infty (-1)^nc_2^{(m)}(n)q^n=(-q;q^2)_\infty^m=\prod\limits_{n=0}^\infty(1+q^{2n+1})^m.
\end{align}
Thus we have for any $m\geq 1$,
\begin{align}
(-1)^nc_2^{(m)}(n)\geq (-1)^nc_2^{(1)}(n).
\end{align}
Clearly, $(-1)^nc_2^{(1)}(n)$ counts the number of partitions of $n$ into distinct odd parts. This fact was observed before by Andrews \cite[p.\ 488]{Andrews}. If $n$ is odd, then $n$ itself gives a partition of $n$. If $n\geq 4$ is even, then $1+(n-1)$ is a desired partition of $n$. Thus $(-1)^nc_2^{(1)}(n)>0$ holds for all $n\geq 1$ except that $c_2^{(1)}(2)=0$. When $m\geq 2$, by \eqref{c2m-gen} we find that $c_2^{(m)}(2)=\binom{m}{2}$. This finishes the proof.
\end{proof}
To present our second proof, we need the following simple fact.
\begin{lemma}\label{lem-alternating}
Let $f_i(q)=\sum_{n=0}^\infty a_i(n)q^n$ $(i=1,2,\cdots, m, m\geq 2)$ be series satisfying $(-1)^na_i(n)\geq 0$ (resp. $(-1)^na_i(n)> 0$) for each $n\geq 0$ (resp.\ for each $n\geq 0$ except that $a_i(2)$ is allowed to be zero). Then their product
$$f(q)=f_1(q)\cdots f_m(q)=\sum_{n=0}^\infty a(n)q^n$$
satisfies $(-1)^na(n)\geq 0$ (resp. $(-1)^na(n)> 0$) for all $n\geq 0$.
\end{lemma}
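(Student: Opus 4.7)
The plan is to dispose of the nonstrict claim first by the substitution $q\mapsto -q$, then bootstrap to strict positivity by inspecting a single term in the Cauchy convolution. For the weak version, I would observe that $f_i(-q)=\sum_{n\geq 0}((-1)^n a_i(n))\,q^n$ has nonnegative coefficients by hypothesis, and a finite product of power series with nonnegative coefficients again has nonnegative coefficients. Hence $f(-q)=\prod_{i=1}^m f_i(-q)$ has nonnegative coefficients, which translates exactly to $(-1)^n a(n)\geq 0$ for every $n\geq 0$.

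For the strict statement, I would expand the product as a Cauchy convolution:
\begin{align*}
(-1)^n a(n) = \sum_{\substack{n_1,\ldots,n_m\geq 0 \\ n_1+\cdots+n_m=n}} \prod_{i=1}^m (-1)^{n_i}\,a_i(n_i).
\end{align*}
By the weak version already established, every summand is nonnegative, so it suffices to exhibit a single composition $(n_1,\ldots,n_m)$ of $n$ in which no part equals $2$; then each factor $(-1)^{n_i}a_i(n_i)$ is strictly positive by the strict hypothesis and the whole term is strictly positive. Since $m\geq 2$, I can simply take $(n,0,0,\ldots,0)$ when $n\neq 2$ and $(1,1,0,\ldots,0)$ when $n=2$. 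In both cases every part lies in $\{0,1\}\cup\{3,4,5,\ldots\}$, so the corresponding term in the convolution is strictly positive, forcing $(-1)^n a(n)>0$.

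There is no serious obstacle here; the only delicate point is that the hypothesis permits the one exception $a_i(2)=0$, so one must verify that $n$ always admits a composition into $m$ parts avoiding the value $2$ in every coordinate. The assumption $m\geq 2$ is used precisely to handle $n=2$ via the composition $(1,1,0,\ldots,0)$; for all other $n$ the trivial composition $(n,0,\ldots,0)$ already works. The argument makes no use of the specific shape of the $f_i$ beyond the sign condition on their coefficients, which is why the lemma applies uniformly to the Euler-product factorizations appearing in the second proof of Theorem \ref{thm-sign-2}.
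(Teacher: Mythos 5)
Your proof is correct, and it takes a genuinely different route from the paper's. The paper reduces to $m=2$ (and implicitly inducts) by writing each factor as a $2$-dissection $f_i(q)=A_i(q^2)-qB_i(q^2)$: the weak case follows from $A_i,B_i\succeq 0$, and the strict case from the lower bounds $A_i(q)\succeq 1+\sum_{n\geq 2}q^n$ and $B_i(q)\succeq\sum_{n\geq 0}q^n$, which are then pushed through the product. Your weak case via $q\mapsto -q$ is the same observation in cleaner form; your strict case is different: you note that every summand $\prod_i(-1)^{n_i}a_i(n_i)$ in the Cauchy convolution is nonnegative and then exhibit one composition of $n$ avoiding the value $2$ in every coordinate, namely $(n,0,\dots,0)$ for $n\neq 2$ and $(1,1,0,\dots,0)$ for $n=2$ (this is where $m\geq 2$ enters), so that single term is strictly positive. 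Your argument handles all $m$ at once with no induction, and it works verbatim for arbitrary positive real coefficients, whereas the paper's bounds of the form $\succeq 1+\sum q^n$ tacitly use that the strictly positive coefficients are at least $1$ (true in the integer-coefficient application but not forced by the hypothesis as stated). One tiny wording point: the nonnegativity of each individual summand follows directly from the hypothesis $(-1)^{n_i}a_i(n_i)\geq 0$, not from ``the weak version already established,'' which only controls the sum; this does not affect the validity of your argument.
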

\begin{proof}
It suffices to prove the assertion for $m=2$. We write the $2$-dissections of $f_i(q)$ ($i=1,2$) as
\begin{align}
f_i(q)=A_i(q^2)-qB_i(q^2), \quad A_i(q), B_i(q)\in \mathbb{R}[[q]], \quad i=1,2.
\end{align}
Note that
\begin{align}
f(q)=&\left(A_1(q^2)-qB_1(q^2)\right)\left(A_2(q^2)-qB_2(q^2)\right) \nonumber \\
=&\left(A_1(q^2)A_2(q^2)+q^2B_1(q^2)B_2(q^2)\right)-q\left(A_1(q^2)B_2(q^2)+A_2(q^2)B_1(q^2)\right).
\end{align}

In the first case, we have $A_i(q)\succeq 0$ and $B_i(q)\succeq 0$ ($i=1,2$). It is clear that
\begin{align}
A_1(q)A_2(q)+qB_1(q)B_2(q)\succeq 0, \quad A_1(q)B_2(q)+A_2(q)B_1(q)\succeq 0.
\end{align}
Hence $(-1)^na(n)\geq 0$ holds for any $n\geq 0$.

In the second case, we have $(-1)^na_i(n)>0$ ($i=1,2$) holds for any $n\geq 0$ except that $a_i(2)$ may be zero. Hence
\begin{align*}
A_1(q),A_2(q)\succeq 1+\sum_{n=2}^\infty q^n, \quad B_1(q),B_2(q)\succeq \sum_{n=0}^\infty q^n.
\end{align*}
Hence
\begin{align*}
&A_1(q)A_2(q)+qB_1(q)B_2(q)\succeq A_1(q)+qB_1(q)\succeq \sum_{n=0}^\infty q^n, \\
&A_1(q)B_2(q)+A_2(q)B_1(q) \succeq A_1(q)+B_1(q) \succeq \sum_{n=0}^\infty q^n.
\end{align*}
Hence $(-1)^na(n)>0$ holds for any $n\geq 0$.
\end{proof}
\begin{proof}[Second proof of Theorem \ref{thm-sign-2}]
The case $m=1$ can be proved in the same way as the first proof. Thus we assume the fact that $(-1)^nc_2^{(1)}(n)>0$ except for $c_2^{(1)}(2)=0$. Now applying Lemma \ref{lem-alternating} with $f_i(q)=G_2(q)$, we have $f(q)=f_1(q)\cdots f_m(q)=G_2^m(q)$. Hence $(-1)^nc_2^{(m)}(n)>0$ for any $m\geq 2$ and $n\geq 0$.
\end{proof}

After a closer examination of Andrews' proof of \eqref{eq-Andrews-sign} in \cite{Andrews}, we can prove Conjecture \ref{conj} for the case $(t,m)=(p,1)$ for any prime $p$. Because of Theorem \ref{thm-sign-2}, we only need to consider the case $p\geq 3$.
\begin{theorem}\label{thm-cp1}
Let $p\geq 3$ be a prime. If $n\not\equiv \frac{j(3j+1)}{2}$ for any integers $j$ satisfying $\frac{1-p}{2}\leq j\leq \frac{p-1}{2}$, then $c_p^{(1)}(n)=0$. For any integers $j$ satisfying $\frac{1-p}{2}\leq j\leq \frac{p-1}{2}$ and $n\geq 2$, we have
\begin{align}
\mathrm{sgn}\left(c_p^{(1)}\Big(pn+\frac{j(3j+1)}{2}\Big)\right)=(-1)^j.
\end{align}
\end{theorem}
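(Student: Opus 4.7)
The plan is to refine Andrews' proof of \eqref{eq-Andrews-sign} by combining Euler's pentagonal number theorem \eqref{eq-pentagonal} with the ordinary partition generating function; denote the partition function by $\pi(k)$ to avoid a clash with the prime $p$. Writing
$$G_p(q)=\frac{(q;q)_\infty}{(q^p;q^p)_\infty}=\bigg(\sum_{j\in\mathbb{Z}}(-1)^j q^{j(3j+1)/2}\bigg)\bigg(\sum_{k\geq 0}\pi(k)\, q^{pk}\bigg),$$
and comparing coefficients gives the working identity
$$c_p^{(1)}(N)=\sum_{\substack{j\in\mathbb{Z},\ j(3j+1)/2\leq N\\ j(3j+1)/2\equiv N\!\!\pmod{p}}}(-1)^j\,\pi\!\Big(\tfrac{N-j(3j+1)/2}{p}\Big).$$
The vanishing statement is then immediate: since $p\geq 3$ is odd, the value $j(3j+1)/2\pmod{p}$ depends only on $j\pmod{p}$, so the set of residues ever attained by pentagonal numbers mod $p$ coincides with $\{j(3j+1)/2\pmod{p}:(1-p)/2\leq j\leq (p-1)/2\}$; if $N$ lies outside it, the above sum is empty and $c_p^{(1)}(N)=0$.

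For the sign claim I take the canonical representative $j_0\in\{(1-p)/2,\dots,(p-1)/2\}$ whose pentagonal value $r:=j_0(3j_0+1)/2$ is the least element of its residue class modulo $p$ (so $0\leq r<p$), list the other $j$'s of that class in order of increasing pentagonal value as $j^{(1)},j^{(2)},\dots$, and set $\delta_i=(j^{(i)}(3j^{(i)}+1)/2-r)/p\in\mathbb{Z}_{>0}$. Then
$$c_p^{(1)}(pn+r)=(-1)^{j_0}\pi(n)+\sum_{i\geq 1,\ \delta_i\leq n}(-1)^{j^{(i)}}\pi(n-\delta_i),$$
so it suffices to prove that for $n\geq 2$ the leading term strictly dominates in absolute value; this forces $\mathrm{sgn}(c_p^{(1)}(pn+r))=(-1)^{j_0}$, which is the asserted $(-1)^j$.

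The hard part is the uniform domination
$$\pi(n)>\sum_{i\geq 1,\ \delta_i\leq n}\pi(n-\delta_i)\qquad (n\geq 2).$$
Because the pentagonal numbers congruent to $r\pmod{p}$ form a sparse sequence whose gaps grow quadratically, one has $\delta_i\geq c_p\, i^2$ for an explicit $c_p>0$; combined with Hardy--Ramanujan's asymptotic $\pi(k)\sim e^{\pi\sqrt{2k/3}}/(4k\sqrt{3})$ (or cruder two-sided bounds for $\pi$), the right-hand side becomes a rapidly decaying tail that is dwarfed by $\pi(n)$ for all but finitely many $n$, and the remaining small cases are checked by direct computation. Combining this with the vanishing statement completes the proof.
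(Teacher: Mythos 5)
Your reduction of the problem to the identity
\begin{align*}
c_p^{(1)}(pn+r)=(-1)^{j_0}\pi(n)+\sum_{i\geq 1,\ \delta_i\leq n}(-1)^{j^{(i)}}\pi(n-\delta_i)
\end{align*}
is correct, and your vanishing argument is fine (it is the same observation as Lemma \ref{lem-exp}). But the step you defer as "the hard part" --- the uniform domination $\pi(n)>\sum_{i\geq 1,\ \delta_i\leq n}\pi(n-\delta_i)$ --- is false for all large $n$, so the proof cannot be completed along these lines. The quadratic growth $\delta_i\geq c_p i^2$ does not make the tail small relative to $\pi(n)$: by Hardy--Ramanujan, $\pi(n-\delta)/\pi(n)\approx\exp(-C\delta/\sqrt{n})$, so every term with $\delta_i\lesssim\sqrt{n}$ is comparable to $\pi(n)$, and there are on the order of $n^{1/4}$ such terms. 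Already the first two correction terms suffice to kill the claim: $\delta_1,\delta_2$ are fixed constants, so $\pi(n-\delta_1)+\pi(n-\delta_2)\to 2\pi(n)(1+o(1))>\pi(n)$. (This is also forced by Theorem \ref{thm-full-asymptotic}: $c_p^{(1)}(pn+r)$ grows like $\exp(\pi\sqrt{2n/3}\cdot\sqrt{(p-1)/p})$, which is exponentially smaller than $\pi(n)$, so the alternating sum exhibits massive cancellation that no term-by-term absolute-value bound can capture.)

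The paper's proof is designed precisely to absorb this cancellation before any estimation is done. Following Andrews, it dissects $(q;q)_\infty/(q^p;q^p)_\infty$ via the Jacobi triple product into the sum \eqref{Andrews-dissection} of terms $(-1)^r q^{r(3r+1)/2}Q_{\frac{3p-1}{2},i}(1;q^p)$, where each $Q_{k,i}$ has \emph{nonnegative} coefficients by the Andrews--Gordon partition interpretation \eqref{Q-AB}--\eqref{An-positive}. Each residue class mod $p$ then receives contributions from at most two such blocks; when their signs agree the conclusion is immediate, and when they disagree the monotonicity inequality \eqref{ineq-Andrews-B} for the partition counts $B_{\ell,i}$ decides the competition in favor of the block attached to the smaller pentagonal value. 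To repair your argument you would need to regroup the $j$'s in each residue class into such positive combinations; without that regrouping the sign of the sum is not accessible. (As a side remark, working out the $p=5$ case of your identity shows that residue class $2$ is reached by both $j=1$ and $j=2$ with opposite signs $(-1)^j$, and the smaller pentagonal value wins, consistent with Proposition \ref{prop-level5}; so any correct argument must produce the sign of the \emph{minimal} representative, exactly as your $(-1)^{j_0}$ does --- the difficulty is purely in justifying the dominance.)
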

\begin{proof}
We will follow the lines in Andrews' proof of \eqref{eq-Andrews-sign} in \cite{Andrews} but with some modifications.

Using \eqref{eq-pentagonal} and by splitting the sum according to the residue of $n$ modulo $p$, Andrews \cite[Eq.\ (2.5)]{Andrews} proved that
\begin{align}
&\frac{(q;q)_\infty}{(q^p;q^p)_\infty}=\sum_{r=\frac{1-p}{2}}^{\frac{p-1}{2}}(-1)^rq^{r(3r+1)/2}\frac{(q^{3p^2},q^{p(3p+1)/2+3pr},q^{p(3p-1)/2-3pr};q^{3p^2})_\infty}{(q^p;q^p)_\infty} \nonumber \\
&=Q_{\frac{3p-1}{2},\frac{3p-1}{2}}(1;q^p)+\sum_{r=1}^{\frac{p-1}{2}}(-1)^rq^{r(3r-1)/2}\left(Q_{\frac{3p-1}{2},\frac{3p+1}{2}-3r}(1;q^p)  +q^rQ_{\frac{3p-1}{2},\frac{3p-1}{2}-3r}(1;q^p)\right), \label{Andrews-dissection}
\end{align}
where
\begin{align}\label{Q-AB}
Q_{k,i}(1;q):=\frac{(q^i,q^{2k+1-i},q^{2k+1};q^{2k+1})_\infty}{(q;q)_\infty}=\sum_{n\geq 0}A_{k,i}(n)q^n=\sum_{n\geq 0}B_{k,i}(n)q^n,
\end{align}
and
\begin{align}
A_{k,i}(n)&=\text{the number of partitions of $n$ into parts} \,\, \not\equiv 0,\pm i\pmod{2k+1}, \label{defn-A-partition} \\
B_{k,i}(n)&=\text{the number of partitions of $n$ of the form $n=b_1+b_2+\cdots +b_j$} \nonumber \\
 & \text{wherein $b_h\geq b_{h+1}$, $b_h-b_{h+k-1}\geq 2$, and at most $i-1$ of the $b_h$ equal 1.} \label{defn-B-partition}
\end{align}
Note that when $k\geq 2$, we have for $n\geq 2$ that
\begin{align}\label{An-positive}
A_{k,i}(n)>0.
\end{align}
In fact, when $i\not \equiv \pm 1$ (mod $2k+1$), then \eqref{An-positive} holds since $n$ has a partition consisting of 1's. If $i\equiv \pm 1$ (mod $2k+1$), then $n$ has at least a partition into $2$'s and $3$'s. Thus \eqref{An-positive} still holds.

We also recall from \cite[Eq.\ (2.12)]{Andrews-AM} that
\begin{align}\label{ineq-Andrews-B}
B_{\ell,i}(m)>B_{\ell,j}(n) \quad \text{if $\ell\geq 4$, $m>n\geq 1$ and $\ell\geq i>j>0$.}
\end{align}

Recall the set $H_p^{(1)}$ in the proof of Theorem \ref{thm-zero}. From \eqref{Andrews-dissection} we have
\begin{align}
c_p^{(1)}(pn+r)=0, \quad r \in H_p^{(1)},
\end{align}
which we have already seen in Lemma \ref{lem-exp}. Let $\overline{H_p^{(1)}}$ be the set of residues of $\frac{r(3r+1)}{2}$ modulo $p$ where $r$ ranges from $\frac{1-p}{2}$ to $\frac{p-1}{2}$. We have
\begin{align}
H_p^{(1)}\cup \overline{H_p^{(1)}}=\left\{0,1,2,\cdots,p-1\right\}.
\end{align} Now given $a\in \overline{H_p^{(1)}}$, there are at most one $r$ and one $s$ satisfying
\begin{align}
\frac{r(3r-1)}{2}\equiv a, \quad \frac{s(3s+1)}{2}\equiv a \quad \text{and} \quad 0 \leq r,s \leq \frac{p-1}{2}.
\end{align}

\textbf{Case 1.} If such $r$ exists while $s$ doest not exist, then the terms of the form $q^{pn+a}$ ($n\geq 0$) only appear in
\begin{align}
(-1)^rq^{\frac{r(3r-1)}{2}}Q_{\frac{3p-1}{2},\frac{3p+1}{2}-3r}(1;q^p).
\end{align}
From \eqref{Q-AB} and \eqref{An-positive} we deduce that
\begin{align}
\mathrm{sgn}(c_p^{(1)}(n))=(-1)^r, \quad \text{if $n\equiv a$ (mod $p$) and $n\geq 2p+\frac{r(3r-1)}{2}$.}
\end{align}

\textbf{Case 2.} If such $r$ doest not exist, then $s$ exists. Again, by \eqref{Q-AB} and \eqref{An-positive} we deduce that
\begin{align}
\mathrm{sgn}(c_p^{(1)}(n))=(-1)^s, \quad \text{if $n\equiv a$ (mod $p$) and $n\geq 2p+\frac{s(3s+1)}{2}$.}
\end{align}

\textbf{Case 3.} If both $r$ and $s$ exist, we split our discussions into two subcases.

\textbf{Case 3.1} If $a=0$, then $r=s=0$ and terms of the form $q^{pn}$ ($n\geq 0$) only appear in the term $Q_{\frac{3p-1}{2},\frac{3p-1}{2}}(1;q^p)$. By  \eqref{Q-AB} and \eqref{An-positive} we see that
\begin{align}
\mathrm{sgn}(c_p^{(1)}(n))=1, \quad \text{if $n\equiv 0$ (mod $p$) and $n\geq 2p$.}
\end{align}

\textbf{Case 3.2} If $a\neq 0$, then both $r$ and $s$ are nonzero. Terms of the form $q^{pn+a}$ only appear in the term
\begin{align}\label{eq-sum}
(-1)^rq^{r(3r-1)/2}Q_{\frac{3p-1}{2},\frac{3p+1}{2}-3r}(1;q^p)  +(-1)^sq^{s(3s+1)/2}Q_{\frac{3p-1}{2},\frac{3p-1}{2}-3s}(1;q^p).
\end{align}
From the choices of $r$ and $s$ we have
\begin{align}
\frac{r(3r-1)}{2}-\frac{s(3s+1)}{2}=\frac{(r+s)(3(r-s)-1)}{2} =\lambda p, \quad \lambda \in \mathbb{Z}.
\end{align}
Since $2\leq r+s\leq p-1$, we deduce that
\begin{align*}
3(r-s)\equiv 1 \pmod{p}.
\end{align*}
Hence $r\neq s$. If $r<s$, then we may write the sum in \eqref{eq-sum} as
\begin{align}
(-1)^rq^{r(3r-1)/2}\left(Q_{\frac{3p-1}{2},\frac{3p+1}{2}-3r}(1;q^p)  +(-1)^{r+s}q^{|\lambda| p}Q_{\frac{3p-1}{2},\frac{3p-1}{2}-3s}(1;q^p)\right).
\end{align}
If $r+s$ is even, then from the above we deduce that
\begin{align}\label{cp1-ineq-proof-1}
\mathrm{sgn}(c_p^{(1)}(n))=(-1)^r \quad \text{if  $n\equiv a \pmod{p}$ and $n\geq \frac{r(3r-1)}{2}+2p$}.
\end{align}
If $r+s$ is odd, then since $\frac{3p-1}{2}\geq 4$, from \eqref{ineq-Andrews-B} we deduce that for $n\geq 2$,
\begin{align*}
&[q^{pn}]\left(Q_{\frac{3p-1}{2},\frac{3p+1}{2}-3r}(1;q^p)  +(-1)^{r+s}q^{|\lambda| p}Q_{\frac{3p-1}{2},\frac{3p-1}{2}-3s}(1;q^p)\right) \nonumber \\
&=B_{\frac{3p-1}{2},\frac{3p+1}{2}-3r}(n)-B_{\frac{3p-1}{2},\frac{3p-1}{2}-3s}(n-|\lambda|)>0.
\end{align*}
Hence \eqref{cp1-ineq-proof-1} still holds.

If $r>s$, then we may write the sum in \eqref{eq-sum} as
\begin{align}
(-1)^sq^{s(3s+1)/2}\left(Q_{\frac{3p-1}{2},\frac{3p-1}{2}-3s}(1;q^p)+(-1)^{r+s}q^{|\lambda| p} Q_{\frac{3p-1}{2},\frac{3p+1}{2}-3r}(1;q^p) \right).
\end{align}
Arguing as before, we deduce that
\begin{align}\label{cp1-ineq-proof-2}
\mathrm{sgn}(c_p^{(1)}(n))=(-1)^s \quad \text{if  $n\equiv a \pmod{p}$ and $n\geq \frac{s(3s+1)}{2}+2p$}.
\end{align}
Combining the above cases, we complete our proof.
\end{proof}

From the work of Schlosser and Zhou \cite{Schlosser-Zhou}, we can also prove the conjecture for $(t,m)=(p,3)$ with $p$ being any prime. Again, we only need to consider the case $p\geq 3$.
\begin{theorem}\label{thm-cp3}
Let $p\geq 3$ be a prime. If $n\not\equiv \frac{j(j+1)}{2}$ $\pmod{p}$ for any integer $0\leq j \leq \frac{p-1}{2}$, then $c_p^{(3)}(n)=0$. Furthermore, for any integers $j$ satisfying $0\leq j \leq \frac{p-1}{2}$ and $n\geq 0$, we have
\begin{align}
\mathrm{sgn}\left(c_p^{(3)}\Big(pn+\frac{j(j+1)}{2}\Big)\right)=(-1)^j.
\end{align}
\end{theorem}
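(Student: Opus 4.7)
\textbf{Plan for Theorem~\ref{thm-cp3}.} The plan is to mirror the argument structure of Theorem~\ref{thm-cp1}, substituting Jacobi's identity \eqref{eq-Jacobi} for the pentagonal number theorem \eqref{eq-pentagonal}, and the Schlosser--Zhou strong cubic sign inequality \eqref{eq-strong-Schlosser-sign} for Andrews' inequality \eqref{eq-strong-Andrews-sign}.

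The vanishing assertion is immediate: Jacobi's identity supports $(q;q)_\infty^3$ on triangular exponents $k(k+1)/2$, while $(q^p;q^p)_\infty^{-3}$ shifts only by multiples of $p$, so $c_p^{(3)}(n)=0$ unless $n\equiv k(k+1)/2\pmod{p}$ for some $k$. Because $k$ and $p-1-k$ give the same residue modulo $p$, it suffices to take $0\le k\le (p-1)/2$, recovering the $m=3$ case of Lemma~\ref{lem-exp}. For the sign, the key base-case computation is $c_p^{(3)}(j(j+1)/2)=(-1)^j(2j+1)$: writing $(q^p;q^p)_\infty^{-3}=\sum_{m\ge 0}b(m)q^{pm}$ with $b(0)=1$ and $b(m)\ge 0$, the constraints $k(k+1)/2+pm=j(j+1)/2$ with $k,m\ge 0$ and $k\equiv j$ or $k\equiv p-1-j\pmod{p}$ force $k\le j\le (p-1)/2\le p-1-j$, leaving only $(k,m)=(j,0)$. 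Applying \eqref{eq-strong-Schlosser-sign} to the pair $(j(j+1)/2,\,pn)$ then yields the weak inequality $(-1)^j c_p^{(3)}(pn+j(j+1)/2)\ge 0$ for every $n\ge 0$.

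The main obstacle is to upgrade this to a strict inequality, i.e.\ to rule out $c_p^{(3)}(pn+j(j+1)/2)=0$. Let $S_j(q)$ denote the projection of $(q;q)_\infty^3$ onto the residue class $j(j+1)/2\pmod p$. Using the involution $k\mapsto -k-1$ (under which the summand $(-1)^k(2k+1)q^{k(k+1)/2}$ is invariant), one verifies
\[
S_j(q)=\sum_{k\in\mathbb{Z},\,k\equiv j\!\!\pmod{p}}(-1)^k(2k+1)q^{k(k+1)/2}\qquad (j<(p-1)/2),
\]
with an extra factor $1/2$ in the case $j=(p-1)/2$ (where the involution preserves the residue class). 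For $j=(p-1)/2$, substituting $k=(p-1)/2+pu$ lets the weight $2pu+p=p(2u+1)$ factor out, and Jacobi's identity at $q^{p^2}$ gives the clean product
\[
S_{(p-1)/2}(q)=(-1)^{(p-1)/2}\,p\,q^{(p^2-1)/8}(q^{p^2};q^{p^2})_\infty^3.
\]
Dividing by $(q^p;q^p)_\infty^3$ produces $(-1)^{(p-1)/2}\,p\,q^{(p^2-1)/8}\prod_{n\ge 1,\,p\nmid n}(1-q^{pn})^{-3}$, whose $q^{pn+(p^2-1)/8}$-coefficient equals $(-1)^{(p-1)/2}\,p\,d(n)$ with $d(n)>0$ for every $n\ge 0$ (since each nonnegative integer admits a $3$-colored partition into parts coprime to $p$, e.g.\ all ones). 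This settles $j=(p-1)/2$. For $0\le j<(p-1)/2$ the weight $2pu+2j+1$ does not split cleanly; the natural approach is to write $2pu+2j+1=p(2u+1)-(p-2j-1)$ and combine the weighted theta sum $\sum_u(-1)^u(2u+1)q^{pu(pu+2j+1)/2}$ with the unweighted one, which by Ramanujan's form of Jacobi's triple product factors as $(q^{p(p+2j+1)/2},\,q^{p(p-2j-1)/2},\,q^{p^2};\,q^{p^2})_\infty$. The delicate point---and the main hurdle---is to verify that $(-1)^j S_j(q)\cdot(q^p;q^p)_\infty^{-3}$ has strictly positive coefficients for every admissible exponent; an appealing alternative would be to exhibit a direct combinatorial partition interpretation of this subseries in the spirit of Andrews' $A_{k,i}$ and $B_{k,i}$ counts used in the proof of Theorem~\ref{thm-cp1}.
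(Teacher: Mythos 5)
Your reduction of the vanishing statement to Jacobi's identity, your base-case computation $c_p^{(3)}\bigl(\tfrac{j(j+1)}{2}\bigr)=(-1)^j(2j+1)$, and your derivation of the weak inequality $(-1)^jc_p^{(3)}\bigl(pn+\tfrac{j(j+1)}{2}\bigr)\geq 0$ from \eqref{eq-strong-Schlosser-sign} are all correct, and your factorization settling the case $j=\tfrac{p-1}{2}$ is a nice self-contained argument. However, there is a genuine gap: for $0\leq j<\tfrac{p-1}{2}$ you never establish the strict inequality, and strictness is precisely the content of the theorem beyond the already-published inequality \eqref{eq-Schlosser-sign}. The statement $\mathrm{sgn}\bigl(c_p^{(3)}(pn+\tfrac{j(j+1)}{2})\bigr)=(-1)^j$ asserts in particular that these coefficients never vanish, and your proposal only says that "the natural approach is to write $2pu+2j+1=p(2u+1)-(p-2j-1)$" and that "an appealing alternative would be" a combinatorial interpretation. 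The suggested decomposition is itself problematic as a positivity argument, since it expresses $(-1)^jS_j(q)$ as a \emph{difference} of two series (a weighted theta sum minus $(p-2j-1)$ times a triple product), and after dividing by $(q^p;q^p)_\infty^3$ one would still have to control a subtraction of two series with positive coefficients; nothing in your sketch rules out cancellation to zero.

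For comparison, the paper does not supply these details either: it extracts the theorem directly from the proof of \eqref{eq-Schlosser-sign} in Schlosser and Zhou, whose argument (parallel to Andrews' dissection via the $Q_{k,i}$, $A_{k,i}$, $B_{k,i}$ apparatus used in the proof of Theorem \ref{thm-cp1}) expresses each arithmetic-progression piece of $G_p^3(q)$ in terms of series whose coefficients are manifestly positive partition counts, which is exactly the ingredient your proposal is missing. To make your route complete you would need either to carry out the combinatorial interpretation you allude to, or to import the relevant positivity lemma from the Schlosser--Zhou proof rather than only their inequality \eqref{eq-strong-Schlosser-sign}.
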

\begin{proof}
The assertions follow readily from the proof of \eqref{eq-Schlosser-sign} in \cite{Schlosser-Zhou}. We omit the details.
\end{proof}

\begin{proof}[Proof of Theorem \ref{thm-conj-partial}]
This follows from Theorems \ref{thm-sign-2}, \ref{thm-cp1} and \ref{thm-cp3}.
\end{proof}


\begin{rem}
After the completion of the first version of this article, we learned from Prof.\ Dongxi Ye that under his guidance, Yansu Wang, a student at the Sun Yat-Sen University (Zhuhai) has independently discovered the sign-change behaviors of $c_p^{24/(p-1)}(n)$ with $p\in \{3,5,7,13\}$ in her 2021 undergraduate thesis.
\end{rem}

\subsection*{Acknowledgements}
We thank Prof. Wentson J.T. Zang for some helpful comments. The author was supported by the National Natural Science Foundation of China (11801424) and a start-up research grant of the Wuhan University.

\section*{Appendix}
In this appendix, we provide the sets $P_t^{(m)}$, $N_t^{(m)}$ and $Z_t^{(m)}$ for $1\leq t \leq 24$ and $1\leq m \leq \frac{24}{t-1}$ in explicit forms. As said in Theorem \ref{thm-zero}, there are three special cases corresponding to $(t,m)=(3,9), (4,4)$ and $(5,5)$. We will add marks (SC1), (SC2) and (SC3) for these cases in the tables and point out what this special case means at the bottom of that table. Moreover, for $t$ being a prime less than $24$, we also give the exceptional set $\mathcal{E}_t^{(m)}$ explicitly.

\begin{table}[H]
\renewcommand\arraystretch{1.3}
\begin{tabular}{cccccc}
  \hline
 $m$ & $P_{2}^{(m)}$ & $N_{2}^{(m)}$ & $Z_{2}^{(m)}$  & $\mathcal{E}_{2}^{(m)}$ \\
 1  & $\{0\}$  & $\{1\}$ &  $\emptyset$ & $\{2\}$ \\
 $m\geq 2$ & $\{0\}$ & $\{1\}$ & $\emptyset$ & $\emptyset$ \\
 \hline
\end{tabular}
\caption{Data for $c_{2}^{(m)}(n)$}\label{tab-2}
\end{table}

\begin{table}[H]
\renewcommand\arraystretch{1.3}
\begin{tabular}{cccccc}
  \hline
 $m$ & $P_{3}^{(m)}$ & $N_{3}^{(m)}$ & $Z_{3}^{(m)}$  & $\mathcal{E}_{3}^{(m)}$ \\
  1 & $\{0\}$ & $\{1,2\}$ & $\emptyset$  & $\{5\}$ \\
  2 & $\{0\}$ & $\{1,2\}$ & $\emptyset$ & $\{5\}$ \\
  3 & $\{0\}$ & $\{1\}$ & $\{2\}$ & $\emptyset$ \\
  4 & $\{0\}$ & $\{1,2\}$ & $\emptyset$ & $\emptyset$ \\
  5 & $\{0,2\}$ & $\{1\}$ & $\emptyset$ & $\emptyset$ \\
  6 & $\{0,2\}$ & $\{1\}$ & $\emptyset$ & $\{0\}$ \\
  7 & $\{0,2\}$ & $\{1\}$ & $\emptyset$ & $\{0\}$ \\
  8 & $\{0,2\}$ & $\{1\}$ & $\emptyset$ & $\{0\}$ \\
  9 & $\{2\}$ & $\{1\}$ & $\{0\}$ (SC1) & $\emptyset$  \\
  10 & $\{2\}$ & $\{0,1\}$ & $\emptyset$ & $\{0\}$ \\
  11 & $\{2\}$ & $\{0,1\}$ & $\emptyset$ & $\{0\}$ \\
  12 & $\{2\}$ & $\{0,1\}$ & $\emptyset$ & $\{0\}$ \\
  \hline
\end{tabular}

SC1: $c_3^{(9)}(9n)>0$, $c_3^{(9)}(9n+3)<0$ and $c_3^{(9)}(9n+6)=0$ (see \eqref{eq-exc-1}).
\caption{Data for $c_{3}^{(m)}(n)$}\label{tab-3}
\end{table}

\begin{table}[H]
\renewcommand\arraystretch{1.3}
\begin{tabular}{ccccc}
  \hline
 $m$ & $P_{4}^{(m)}$ & $N_{4}^{(m)}$ & $Z_{4}^{(m)}$   \\
  1 & $\{0,3\}$ & $\{1,2\}$ & $\emptyset$   \\
  2 & $\{0,3\}$ & $\{1,2\}$ & $\emptyset$ \\
  3 & $\{0,3\}$ & $\{1,2\}$ & $\emptyset$  \\
  4 & $\{3\}$ & $\{1\}$ & $\{0,2\}$ (SC2) \\
  5 & $\{2,3\}$ & $\{0,1\}$ & $\emptyset$  \\
  6 & $\{2,3\}$ & $\{0,1\}$ & $\emptyset$  \\
  7 & $\{2,3\}$ & $\{0,1\}$ & $\emptyset$  \\
  8 & $\{2\}$ & $\{0\}$ & $\{1,3\}$  \\
  \hline
\end{tabular} \\
SC2: $c_4^{(4)}(8n+r)>0$ ($r\in \{0,2\}$) and $c_4^{(4)}(8n+r)<0$ ($r\in \{4,6\}$) (see \eqref{eq-exc-2}).
\caption{Data for $c_{4}^{(m)}(n)$}\label{tab-4}
\end{table}

\begin{table}[H]
\renewcommand\arraystretch{1.3}
\begin{tabular}{cccccc}
  \hline
 $m$ & $P_{5}^{(m)}$ & $N_{5}^{(m)}$ & $Z_{5}^{(m)}$  & $\mathcal{E}_{5}^{(m)}$ \\
  1 & $\{0\}$ & $\{1,2\}$ & $\{3,4\}$  & $\{7\}$ \\
  2 & $\{0,3,4\}$ & $\{1,2\}$ & $\emptyset$ & $\{9\}$ \\
  3 & $\{0,3\}$ & $\{1\}$ & $\{2,4\}$ & $\emptyset$ \\
  4 & $\{0,2,3\}$ & $\{1,4\}$ & $\emptyset$ & $\{5\}$ \\
  5 & $\{2,3\}$ & $\{1,4\}$ & $\{0\}$ (SC3) & $\{30\}$ \\
  6 & $\{0,2,3\}$ & $\{1,4\}$ & $\emptyset$ & $\emptyset$ \\
  \hline
\end{tabular}

SC3: $c_5^{(5)}(25n)>0$, $c_5^{(5)}(25n+r)<0$ ($r\in \{5,10\}$) and $c_5^{(5)}(25n+r)<0$ ($r\in \{15,20\}$) (see \eqref{eq-exc-3}).
\caption{Data for $c_{5}^{(m)}(n)$}\label{tab-5}
\end{table}

\begin{table}[H]
\renewcommand\arraystretch{1.3}
\begin{tabular}{ccccc}
  \hline
 $m$ & $P_{6}^{(m)}$ & $N_{6}^{(m)}$ & $Z_{6}^{(m)}$ \\
  1 & $\{0,4,5\}$ & $\{1,2,3\}$ & $\emptyset$   \\
  2 & $\{3,4,5\}$ & $\{0,1,2\}$ & $\emptyset$  \\
  3 & $\{3,4\}$ & $\{0,1\}$ & $\{2,5\}$  \\
  4 & $\{2,3,4\}$ & $\{0,1,5\}$ & $\emptyset$  \\
  \hline
\end{tabular}
\caption{Data for $c_{6}^{(m)}(n)$}\label{tab-6}
\end{table}

\begin{table}[H]
\renewcommand\arraystretch{1.3}
\begin{tabular}{cccccc}
  \hline
 $m$ & $P_{7}^{(m)}$ & $N_{7}^{(m)}$ & $Z_{7}^{(m)}$  & $\mathcal{E}_{7}^{(m)}$ \\
  1 & $\{0,5\}$ & $\{1,2\}$ & $\{3,4,6\}$  & $\{12\}$ \\
  2 & $\{0,3,4,5\}$ & $\{1,2,6\}$ & $\emptyset$ & $\emptyset$ \\
  3 & $\{0,3\}$ & $\{1,6\}$ & $\{2,4,5\}$ & $\emptyset$ \\
  4 & $\{0,2,3\}$ & $\{1,4,5,6\}$ & $\emptyset$ & $\emptyset$ \\
  \hline
\end{tabular}
\caption{Data for $c_{7}^{(m)}(n)$}\label{tab-7}
\end{table}

\begin{table}[H]
\renewcommand\arraystretch{1.3}
\begin{tabular}{ccccc}
  \hline
 $m$ & $P_{8}^{(m)}$ & $N_{8}^{(m)}$ & $Z_{8}^{(m)}$   \\
  1 & $\{0,5,6,7\}$ & $\{1,2,3,4\}$ & $\emptyset$  \\
  2 & $\{0,3,4,5\}$ & $\{1,2,6,7\}$ & $\emptyset$  \\
  3 & $\{0,2,3,5\}$ & $\{1,4,6,7\}$ & $\emptyset$ \\
  \hline
\end{tabular}
\caption{Data for $c_{8}^{(m)}(n)$}
\end{table}

\begin{table}[H]
\renewcommand\arraystretch{1.3}
\begin{tabular}{ccccc}
  \hline
 $m$ & $P_{9}^{(m)}$ & $N_{9}^{(m)}$ & $Z_{9}^{(m)}$  \\
  1 & $\{0,4,5,7,8\}$ & $\{1,2,3,6\}$ & $\emptyset$  \\
  2 & $\{0,3,4,5,7\}$ & $\{1,2,6,8\}$ & $\emptyset$ \\
  3 & $\{0,3\}$ & $\{6\}$ & $\{1,2,4,5,7,8\}$  \\
  \hline
\end{tabular}
\caption{Data for $c_{9}^{(m)}(n)$}
\end{table}

\begin{table}[H]
\renewcommand\arraystretch{1.3}
\begin{tabular}{ccccc}
  \hline
 $m$ & $P_{10}^{(m)}$ & $N_{10}^{(m)}$ & $Z_{10}^{(m)}$   \\
  1 & $\{0,6,7\}$ & $\{1,2,5\}$ & $\{3,4,8,9\}$   \\
  2 & $\{0,3,4,6,7\}$ & $\{1,2,5,8,9\}$ & $\emptyset$\\
  \hline
\end{tabular}
\caption{Data for $c_{10}^{(m)}(n)$}
\end{table}

\begin{table}[H]
\renewcommand\arraystretch{1.3}
\begin{tabular}{cccccc}
  \hline
 $m$ & $P_{11}^{(m)}$ & $N_{11}^{(m)}$ & $Z_{11}^{(m)}$  & $\mathcal{E}_{11}^{(m)}$ \\
  1 & $\{0,5,7\}$ & $\{1,2,4\}$ & $\{3,6,8,9,10\}$  & $\emptyset$ \\
  2 & $\{0,3,4,5,7,10\}$ & $\{1,2,6,8,9\}$ & $\emptyset$  & $\emptyset$ \\
  \hline
\end{tabular}
\caption{Data for $c_{11}^{(m)}(n)$}\label{tab-11}
\end{table}

\begin{table}[H]
\renewcommand\arraystretch{1.3}
\begin{tabular}{ccccc}
  \hline
 $m$ & $P_{12}^{(m)}$ & $N_{12}^{(m)}$ & $Z_{12}^{(m)}$   \\
  1 & $\{0,5,7,8,9,10\}$ & $\{1,2,3,4,6,11\}$ & $\emptyset$   \\
  2 & $\{0,2,3,4,5,7\}$ & $\{1,6,8,9,10,11\}$ & $\emptyset$  \\
  \hline
\end{tabular}
\caption{Data for $c_{12}^{(m)}(n)$}
\end{table}

\begin{table}[H]
\renewcommand\arraystretch{1.3}
\begin{tabular}{cccccc}
  \hline
 $m$ & $P_{13}^{(m)}$ & $N_{13}^{(m)}$ & $Z_{13}^{(m)}$  & $\mathcal{E}_{13}^{(m)}$ \\
  1 & $\{0,5,7,9\}$ & $\{1,2,12\}$ & $\{3,4,6,8,10,11\}$  & $\{9,35\}$ \\
  2 & $\{0,3,4,5,10\}$ & $\{1,2,6,7,8,9,11,12\}$ & $\emptyset$  & $\{7,11,12,23,25\}$ \\
  \hline
\end{tabular}
\caption{Data for $c_{13}^{(m)}(n)$}\label{tab-13}
\end{table}

\begin{table}[H]
\renewcommand\arraystretch{1.3}
\begin{tabular}{ccccc}
  \hline
 $m$ & $P_{14}^{(m)}$ & $N_{14}^{(m)}$ & $Z_{14}^{(m)}$   \\
  1 & $\{0,5,8,9\}$ & $\{1,2,7,12\}$ & $\{3,4,6,10,11,13\}$  \\
  \hline
\end{tabular}
\caption{Data for $c_{14}^{(m)}(n)$}
\end{table}

\begin{table}[H]
\renewcommand\arraystretch{1.3}
\begin{tabular}{ccccc}
  \hline
 $m$ & $P_{15}^{(m)}$ & $N_{15}^{(m)}$ & $Z_{15}^{(m)}$  \\
  1 & $\{0,5,6,7,11\}$ & $\{1,2,10,12\}$ & $\{3,4,8,9,13,14\}$  \\
  \hline
\end{tabular}
\caption{Data for $c_{15}^{(m)}(n)$}
\end{table}

\begin{table}[H]
\renewcommand\arraystretch{1.3}
\begin{tabular}{ccccc}
  \hline
 $m$ & $P_{16}^{(m)}$ & $N_{16}^{(m)}$ & $Z_{16}^{(m)}$  \\
  1 & $\{0,4,5,6,7,9,10,11\}$ & $\{1,2,3,8,12,13,14,15\}$ & $\emptyset$  \\
  \hline
\end{tabular}
\caption{Data for $c_{16}^{(m)}(n)$}
\end{table}

\begin{table}[H]
\renewcommand\arraystretch{1.3}
\begin{tabular}{cccccc}
  \hline
 $m$ & $P_{17}^{(m)}$ & $N_{17}^{(m)}$ & $Z_{17}^{(m)}$  & $\mathcal{E}_{17}^{(m)}$ \\
  1 & $\{0,5,7,9\}$ & $\{1,2,6,12,15\}$ & $\{3,4,8,10,11,13,14,16\}$  & $\{6,9,23,57\}$ \\
  \hline
\end{tabular}
\caption{Data for $c_{17}^{(m)}(n)$}\label{tab-17}
\end{table}

\begin{table}[H]
\renewcommand\arraystretch{1.3}
\begin{tabular}{ccccc}
  \hline
 $m$ & $P_{18}^{(m)}$ & $N_{18}^{(m)}$ & $Z_{18}^{(m)}$   \\
  1 & $\{0,3,4,5,6,7,8,10,11\}$ & $\{1,2,9,12,13,14,15,16,17\}$ & $\emptyset$  \\
  \hline
\end{tabular}
\caption{Data for $c_{18}^{(m)}(n)$}\label{tab-18}
\end{table}

\begin{table}[H]
\renewcommand\arraystretch{1.3}
\begin{tabular}{cccccc}
  \hline
 $m$ & $P_{19}^{(m)}$ & $N_{19}^{(m)}$ & $Z_{19}^{(m)}$  & $\mathcal{E}_{19}^{(m)}$ \\
  1 & $\{0,3,5,7,13\}$ & $\{1,2,12,15,16\}$ & $\begin{matrix} \{4,6,8,9,10,\\ 11,14,17,18\} \end{matrix}$  & $\{3,13,16,32,70\}$ \\
  \hline
\end{tabular}
\caption{Data for $c_{19}^{(m)}(n)$}\label{tab-19}
\end{table}

\begin{table}[H]
\renewcommand\arraystretch{1.3}
\begin{tabular}{ccccc}
  \hline
 $m$ & $P_{20}^{(m)}$ & $N_{20}^{(m)}$ & $Z_{20}^{(m)}$  \\
  1 & $\{0,2,5,6,7,11\}$ & $\{1,10,12,15,16,17\}$ & $\{3,4,8,9,13,14,18,19\}$   \\
  \hline
\end{tabular}
\caption{Data for $c_{20}^{(m)}(n)$}\label{tab-20}
\end{table}

\begin{table}[H]
\renewcommand\arraystretch{1.3}
\begin{tabular}{ccccc}
  \hline
 $m$ & $P_{21}^{(m)}$ & $N_{21}^{(m)}$ & $Z_{21}^{(m)}$  \\
  1 & $\{0,5,7,8,9,16\}$ & $\{1,2,12,14,15,19\}$ & $\{3,4,6,10,11,13,17,18,20\}$   \\
  \hline
\end{tabular}
\caption{Data for $c_{21}^{(m)}(n)$}\label{tab-21}
\end{table}

\begin{table}[H]
\renewcommand\arraystretch{1.3}
\begin{tabular}{ccccc}
  \hline
 $m$ & $P_{22}^{(m)}$ & $N_{22}^{(m)}$ & $Z_{22}^{(m)}$   \\
  1 & $\{0,4,5,7,13\}$ & $\{1,2,11,12,15,16,18\}$ & $\{3,6,8,9,10,14,17,19,20,21\}$   \\
  \hline
\end{tabular}
\caption{Data for $c_{22}^{(m)}(n)$}\label{tab-22}
\end{table}

\begin{table}[H]
\renewcommand\arraystretch{1.3}
\begin{tabular}{cccccc}
  \hline
 $m$ & $P_{23}^{(m)}$ & $N_{23}^{(m)}$ & $Z_{23}^{(m)}$ & $\mathcal{E}_{23}^{(m)}$ \\
  1 & $\{0,3,5,7,11,22\}$ & $\{1,2,8,12,15,17\}$ & $\begin{matrix} \{4,6,9,10,13,14,\\ 16,18,19,20,21\}\end{matrix}$  & $\begin{matrix}\{3,8,11,17, \\ 31, 34,54,100\}\end{matrix} $ \\
  \hline
\end{tabular}
\caption{Data for $c_{23}^{(m)}(n)$}\label{tab-23}
\end{table}

\begin{table}[H]
\renewcommand\arraystretch{1.3}
\begin{tabular}{ccccc}
  \hline
 $m$ & $P_{24}^{(m)}$ & $N_{24}^{(m)}$ & $Z_{24}^{(m)}$   \\
  1 & $\begin{matrix}\{0,3,4,5,7,9,13,\\14,18,20,22,23\}\end{matrix}$ & $\begin{matrix}\{1,2,6,8,10,11,12,\\ 15,16,17,19,21\}\end{matrix}$ & $\emptyset$  \\
  \hline
\end{tabular}
\caption{Data for $c_{24}^{(m)}(n)$}\label{tab-24}
\end{table}

\end{document}